\documentclass[12pt]{amsart} 
\usepackage{euler}
\usepackage{euscript}
\usepackage{multicol} 
\usepackage{amssymb}

\usepackage{color}
\usepackage{xcolor}

\definecolor{amaranth}{rgb}{0.9, 0.17, 0.31}
\definecolor{ao}{rgb}{0.0, 0.0, 1.0}
\definecolor{ao(english)}{rgb}{0.0, 0.5, 0.0}
\definecolor{deepmagenta}{rgb}{0.8, 0.0, 0.8}

\usepackage{enumitem}

\usepackage[pdftex]{graphicx}
\usepackage{amsmath} 
\usepackage{times} 
 
\def\XXint#1#2#3{{\setbox0=\hbox{$#1{#2#3}{\int}$}
     \vcenter{\hbox{$#2#3$}}\kern-.5\wd0}}

\theoremstyle{plain}
\newtheorem{theorem}{Theorem}[section]
\newtheorem{proposition}{Proposition}[section]
\newtheorem{corollary}{Corollary}[section]
\newtheorem{lemma}{Lemma}[section]
\newtheorem{remark}{\bf Remark}[section]
\theoremstyle{definition}

\newcommand{\sgn}{\mathop\mathrm{sgn}}

\newcommand{\Tr}{\mathop\mathrm{Tr}}

\usepackage[pdftex]{graphicx}
\usepackage{amsmath} 
\usepackage{times} 
 
\def\XXint#1#2#3{{\setbox0=\hbox{$#1{#2#3}{\int}$}
     \vcenter{\hbox{$#2#3$}}\kern-.5\wd0}}

\begin{document}

\title[Szeg\"o limit theorem and Heisenberg Laplacian]
{Szeg\"o limit theorem and Heisenberg Laplacian
}

\author{Jiyu Fan}
\address{Jiyu Fan:  Department of Mathematics, Imperial College London,
j.fan23@imperial.ac.uk}

\author{Ari Laptev}
\address{Ari Laptev:  Department of Mathematics, Imperial College London,
a.laptev@imperial.ac.uk}

\begin{abstract} The aim of this paper is to obtain a version of the classical Szeg\"o limit theorem, where instead of the operator of second derivative on a circle we consider the Heisenberg-H\"ormander Laplacian in $L^2(\Bbb R^3)$. Besides, we derive a sharp inequality for convex functions that we call Szeg\"o-type inequality.
\end{abstract}

\keywords{Heisenberg Laplacian, Berezin-Lieb inequalities, Szeg\"o limit theorem.}

\subjclass[2010]{35P15, 58J50.}

\maketitle
 
 \setcounter{equation}{0}

 \section{Introduction} 
 
Let us consider the selfadjoint operator $T=(-d^2/dt^2)^{1/2}$ in $L^2(\Bbb S)$, where $\Bbb S$ is the unit circle. The spectrum of the operator $T$ is discrete and its eigenfunctions and eigenvalues are
$\{e^{ik t}\}$ and $\{|k|\}$ respectively, where $k\in\Bbb Z$. Denote by $P_n$, $n\in\Bbb N$, the orthogonal projection in $L^2(\Bbb S)$ onto the subspace spanned by $e^{ikt}$ with $|k| \le n$. Assume that $B$ is the multiplication operator by a smooth function $b>0$ in $L^2(\Bbb S)$.
The classical Szeg\"o limit theorem from 1920 states that under some additional assumptions on $b$, see \cite{Sz0,Sz1,GS},
\begin{equation}\label{Sz1} 
{\rm Tr} \, \log (P_n B P_n) = \frac{n}{\pi} \int_0^{2\pi} \log b(t) \, dt + O(1), \quad {\rm as} \quad n\to\infty.
\end{equation} 
In 1952 \cite{Sz2} Szeg\"o obtained a remarkable result, where he was able to obtain a sharp second term in \eqref{Sz1} under the condition that $b>0$ and its derivative $b'$ satisfies a Lipschitz condition with exponent $\alpha$, $0<\alpha\le1$. A continuous analogue of Szeg\"o's result was obtained by M. Kac in 1954 \cite{K} and this was 
extended to arbitrary dimension by H. Widom in 1960, \cite{W1}. 

\medskip
\noindent
During the last six decades many different versions of Szeg\"o type theorems appeared in the literature, where it was pointed out that such results could be interpreted as a version of  Weyl asymptotic formulae for differential and pseudodifferential operators. Among them are papers \cite{GO, GLZ, J, JZ, LRS, O, R, S1,S2,W2,Z}. 
Some results were obtained for various classes of differential and pseudodifferential operators, where the asymptotic formulae were proved without any remainder estimates \cite{H, R, Z}. More
precise results have been obtained only in some special cases  \cite{GO, O} and \cite{J}. One important extension of Szeg\"o type limit theorem was considered by 
H. Widom in \cite{W0}, where the author found an asymptotic formula for the trace of the function of a truncated Wiener-Hopf type operator in dimension one and conjectured a multi-dimensional generalization of that formula for a pseudo-differential operator with a symbol having jump discontinuities in both variables.
In 1990 he proved this conjecture for the special case when the jump in any of the two variables occurs on a hyperplane. The multi-dimensional version of this conjecture was fully proved by A.V Sobolev in \cite{PS,S1,S2}.

\medskip
\noindent
In \cite{LapSaf}, one of the authors of this paper and Yu. Safarov obtained some inequalities for selfadjoint operators in a Hilbert space and applied them to various classes of Szeg\"o type limit theorems. 
The aim of this paper is to apply this technique to the case where the projection operators appearing in \eqref{Sz1}  are related to the spectrum of the Heisenberg-H\"ormander Laplacian. 

\medskip
\noindent
Let $d= 3$ and denote 
\begin{equation*}
X_1: = i \,\partial_{x_1}+\frac{i}{2}x_2\, \partial_{x_3}, \quad 
X_2:= i \,\partial_{x_2} - \frac{i}{2} x_1\, \partial_{x_3}, \quad X_3 = -i \partial_{x_3}.
\end{equation*} 
The Lie vector fields $X_1$, $X_2$ and $X_3$ are infinitesimal generators
of the first Heisenberg group which can be described as the set 
$\mathbb R^2\times \mathbb R$ equipped with the group law
\begin{equation*}
  (x_1,x_2,x_3)(y_1,y_2,y_3) = \left(x_1+y_1,\ x_2+y_2,\ x_3+y_3 +
  \frac{x_1y_2-x_2y_1}2\right ).
\end{equation*}
These vector fields satisfy the canonical commutation relation
$$  
[X_1, X_2] =  i X_3.
$$

Let us introduce the quadratic form defined on $C_0^\infty(\Bbb R^3)$ 
\begin{equation}\label{a}
a[u] := \int_{\mathbb R^3} \Big(|X_1 u(x)|^2 + |X_2 u(x)|^2\Big)\, dx.
\end{equation}
Its Friedrichs extension defines a self-adjoint operator $A$ in $L^2(\mathbb R^3)$
\begin{equation}\label{A}
A := X_1^2 + X_2^2.
\end{equation}
The operator $A$ is a subelliptic differential operator that is often called the Heisenberg-H\"ormander Laplacian, \cite{H}.
Denoting by $\mathcal F_3$ the partial Fourier transform 
$$
\mathcal F_3 u(x',\xi_3)= (2\pi)^{-1/2} \int_{-\infty}^\infty
e^{-ix_3\xi_3} u(x',x_3) \,
dx_3, \qquad x'=(x_1,x_2)
$$
we have
\begin{equation}\label{FAF*}
\mathcal F_3 A \mathcal F_3^{*} 
= \left(i\partial_{x_1}-\frac12 x_2\xi_3\right)^2
+\left(i\partial_{x_2}+\frac12 x_1\xi_3\right)^2
=(i\nabla_{x'}+\xi_3\mathbf A(x'))^2,
\end{equation}
where $\mathbf A(x')=\frac12 (-x_2,x_1)$. This operator can be identified with a Laplacian in $x'$
with a constant magnetic field of strength $|\xi_3|$. Its spectrum consists of the Landau levels
\begin{equation*}
  \mu_n(\xi_3):=|\xi_3|(2n+1),
\qquad n \in \mathbb N_0=\{0,1,2,\ldots\}.
\end{equation*}
Therefore the spectrum of the operator $A$ covers the interval $[0,\infty)$.
Note that $\mathcal F_3 A\mathcal F_3^*$ acts as a multiplication
operator with respect to the variable $\xi_3$. By the spectral
theorem, 
\begin{equation*}
\mathcal F_3A\mathcal F_3^*=\sum_{n=0}^\infty \mu_n(\xi_3)
\Pi_{\xi_3,n},
\qquad \Pi_{\xi_3,n}=\Pi'_{\xi_3,n}\otimes I_{L^2(\mathbb R)},
\end{equation*}
where the projector $\Pi'_{\xi_3,n}$ has an explicit representation as a kernel of an integral operator.
It is well known that this kernel equals 
\begin{equation}\label{ProjPrm}
\Pi'_{\xi_3,n}(x',y') = \frac{|\xi_3|}{2\pi} e^{-i\xi_3 x' \times y'/2 - |\xi_3| |x'-y'|^2/4} \, L_{n} \left(|\xi_3| |x'-y'|^2/2\right),
\end{equation}
where $L_{n} $ are Laguerre polynomials. It is well known that the value of the kernel $\Pi'_{\xi_3,n}(x',y')$ on the diagonal is constant and equals 
\begin{equation} \label{projker1}
  \Pi'_{\xi_3,n}(x',x')=\frac{|\xi_3|}{2\pi}.
\end{equation}
see, e.g., \cite{hlw}, \cite{FrLap}, \cite{RW}. 

\medskip
\noindent
For any $\lambda >0$ we consider the spectral projection ${P}_\lambda$ of the operator $A$. Its kernel is identified with the spectral function $e(x,y,\lambda) $ that is given by the formula
\begin{equation} \label{SpF}
e(x,y,\lambda) =(2\pi)^{-1} \sum_n \underset{|\xi_3|(2n+1)<\lambda}\int \Pi_{\xi_3,n}(x',y') e^{i\xi_3(x_3-y_3)} d\xi_3.
\end{equation}
The first aim of this note is the following sharp inequality that we call Szeg\"o  type inequality

\begin{theorem}\label{SBLY}
Let $B$ be the operator of multiplication by a real function $b$ in $L^2(\mathbb R^3)$. Assume that $\varphi$ is a convex function on $\mathbb R$ such that $\varphi\ge0$, $\varphi(0) =0$
and such that 
$$
\int_{\mathbb R^3} \varphi (b(x))\, dx < \infty.
$$
Then the operator $P_\lambda\varphi(B) P_\lambda \in \mathfrak{S}_1$ is trace class and 
\begin{equation}\label{S_ineq}
{\rm Tr}\, \varphi (P_\lambda BP_\lambda)  \le \frac{\lambda^2}{32} \, \int_{\mathbb R^3} \varphi (b(x)) dx. 
\end{equation}
The constant in the right hand side of the inequality \eqref{S_ineq} is sharp.
\end{theorem}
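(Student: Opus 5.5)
The plan is to combine a Berezin--Lieb (Jensen-type) operator inequality with an explicit computation of the diagonal of the spectral function \eqref{SpF}.

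\textbf{Step 1: operator inequality.} Since $\varphi$ is convex with $\varphi(0)=0$, for an orthogonal projection $P$ and a selfadjoint $B$ one has
\[
{\rm Tr}\,\varphi(PBP)\le {\rm Tr}\,P\varphi(B)P .
\]
This is the Berezin--Lieb inequality, in the form used in \cite{LapSaf}. The condition $\varphi(0)=0$ is what handles the kernel of $P$. To prove it, take the spectral decomposition $PBP=\sum_j \beta_j (\cdot,\psi_j)\psi_j$ with $\psi_j\in{\rm Ran}\,P$. Then
\[
\varphi(\beta_j)=\varphi\big((B\psi_j,\psi_j)\big)\le (\varphi(B)\psi_j,\psi_j)
\]
by Jensen's inequality applied to the spectral measure of $B$ in the state $\psi_j$. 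Summing over $j$ gives the claim. If $PBP$ has continuous spectrum, or $B$ is unbounded, I would first truncate $b$ and pass to the limit by monotone convergence.

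\textbf{Step 2: trace class and the trace formula.} Since $\varphi(B)\ge0$ is multiplication by $\varphi(b)$, write $P_\lambda\varphi(B)P_\lambda=(\varphi(b)^{1/2}P_\lambda)^*(\varphi(b)^{1/2}P_\lambda)$. Its trace is the Hilbert--Schmidt norm squared:
\[
\int\!\!\int \varphi(b(x))\,|e(x,y,\lambda)|^2\,dy\,dx=\int \varphi(b(x))\,e(x,x,\lambda)\,dx ,
\]
using $P_\lambda^2=P_\lambda$. This shows $P_\lambda\varphi(B)P_\lambda\in\mathfrak S_1$ as soon as $e(x,x,\lambda)$ is a finite constant.

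\textbf{Step 3: computing the diagonal.} By \eqref{SpF} and \eqref{projker1},
\[
e(x,x,\lambda)=(2\pi)^{-1}\sum_{n\ge0}\int_{|\xi_3|<\lambda/(2n+1)}\frac{|\xi_3|}{2\pi}\,d\xi_3
=\frac{1}{4\pi^2}\sum_n\frac{\lambda^2}{(2n+1)^2}
=\frac{\lambda^2}{4\pi^2}\cdot\frac{\pi^2}{8}=\frac{\lambda^2}{32}.
\]
Combining Steps 1--3 gives \eqref{S_ineq}.

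\textbf{Step 4: sharpness.} This is the main obstacle. I would take $\varphi(s)=s_+$ (or any $\varphi$ that is linear near the relevant values) and $b=\chi_{\Omega_R}$, with $\Omega_R$ a large ball or a dilated box, using the Heisenberg dilations, since $A$ is homogeneous of degree $2$ under $(x',x_3)\mapsto(tx',t^2x_3)$. For linear $\varphi$ on $[0,1]$, the only loss in Step 1 is
\[
{\rm Tr}\,P\chi P-{\rm Tr}\,(P\chi P)^2\;\ge 0
\]
compared with a nonlinear $\varphi$ such as $\varphi(s)=s^2$. So for sharpness of the constant I would simply take $\varphi(s)=|s|$ and $b=\chi_\Omega$ with $b\ge0$. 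Then $\varphi(PBP)=PBP$ and equality holds exactly, since ${\rm Tr}\,P_\lambda\chi_\Omega P_\lambda=\frac{\lambda^2}{32}|\Omega|$ by Step 2. If the intended meaning of sharpness is for strictly convex $\varphi$ such as $s^2$, I would instead show that the ratio tends to $1$ as $|\Omega|\to\infty$ under dilation. This requires
\[
{\rm Tr}\,(P\chi_\Omega P)-{\rm Tr}\,(P\chi_\Omega P)^2=o(|\Omega|),
\]
a boundary-type estimate. I would obtain it from the Gaussian-type off-diagonal decay of the kernel \eqref{ProjPrm} in $x'$, together with the decay of its Fourier integral in $x_3$, bounding $\int_\Omega\int_{\Omega^c}|e(x,y,\lambda)|^2\,dy\,dx$ by a neighbourhood of $\partial\Omega$.
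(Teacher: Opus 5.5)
Your Steps 1--3 are the paper's proof. That is, the Berezin--Lieb inequality via Jensen on an eigenbasis (Proposition \ref{B-L}), followed by the diagonal value $e(x,x,\lambda)=\lambda^2/32$ (Proposition \ref{prop1}). Your factorization through $\varphi(b)^{1/2}P_\lambda$ makes the trace-class claim more explicit than the paper does. Your truncation also avoids the paper's tacit assumption $b\in L^2$ behind $P_\lambda B\in\mathfrak S_2$. Note that the limit there is not monotone in $N$, so what you actually need is lower semicontinuity of the trace under strong convergence.

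The genuine difference is the sharpness claim. The paper gets it from Theorem \ref{Szego}, namely ${\rm Tr}\,\varphi(P_\lambda BP_\lambda)=\frac{\lambda^2}{32}\int\varphi(b)+O(\lambda^{3/2})$ for $b\in C_0^2$ and $\varphi\in W^{2,\infty}$. That relies on Theorem \ref{DSide} and the commutator estimate of Proposition \ref{off_diag}.

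You instead take $\varphi(s)=|s|$ and $b\ge0$. Then Jensen is an identity and \eqref{S_ineq} holds with equality for every $\lambda$. This is correct and elementary, and it suffices for the literal claim that no smaller constant works for all admissible $\varphi,b$.

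It does not cover a fixed strictly convex $\varphi$ such as $s^2$. There Jensen is strict and only asymptotic equality is possible; the paper's route gives exactly that, with a rate.

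Your fallback for that case is only sketched, and the Gaussian decay you invoke is not uniform. The Landau kernels decay on the scale $|\xi_3|^{-1/2}$, which degenerates as $\xi_3\to0$, where many levels contribute. A cleaner way to get $\int_\Omega\int_{\Omega^c}|e(x,y,\lambda)|^2\,dy\,dx=o(\lambda^2)$ for open $\Omega$ of finite measure is to use the group-translation invariance and dilation homogeneity of $A$. With a homogeneous invariant distance $d$, these give $\int_{d(x,y)>r}|e(x,y,\lambda)|^2\,dy=\lambda^2\tau(r\sqrt\lambda)$. Here $\tau(s)\to0$ as $s\to\infty$, because $\int|e(x,y,1)|^2\,dy=1/32$. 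Choosing $r=\lambda^{-1/4}$ then finishes the argument.
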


\noindent
Note that here we do not assume any boundedness or smoothness assumptions on $b$ that are usually imposed in Szeg\"o limit theorems. 

\smallskip
\noindent
The proof of this theorem is somewhat similar to the proof of a sharp Li$\&$Yau type inequality for Heisenberg Laplacian in a domain $\Omega\subset\Bbb R^3$ of finite measure with Dirichlet boundary conditions obtained in \cite{HL}. For some discussions on Li$\&$Yau type inequalities see \cite{ELV, L} and
\cite{FLW}.

\smallskip
\noindent
The sharpness of the constant in the inequality \eqref{S_ineq} is confirmed by the following Szeg\"o type limit theorem that is our second main result.

\medskip
\begin{theorem}\label{Szego}
Let $B$ be the multiplication by a real function $b(x)\in C_0^2(\mathbb R^3)$ in $L^2(\mathbb R^3)$ and let 
$\psi\in W^{2,\infty}(\mathbb R)$ with $\psi(0) = 0$. Then
\begin{multline}\label{SLT}
{\rm Tr}\,  P_\lambda\psi(P_\lambda B P_\lambda) P_\lambda = 
\int_{\mathbb{R}^3} \psi(b(x)) \, e(x,x,\lambda) \, dx + O(\lambda^{3/2})\\
=
\frac{\lambda^2}{32} \, \int_{\mathbb{R}^3} \psi (b(x)) dx + O(\lambda^{3/2}), \quad {\rm as} \quad \lambda\to\infty.
\end{multline}
\end{theorem}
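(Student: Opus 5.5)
The plan is to reduce the theorem to an exact diagonal computation plus a Hilbert--Schmidt estimate for the commutator $[P_\lambda,B]$, using the operator inequality technique of \cite{LapSaf}.

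\textbf{Step 1: the second equality is exact.} By \eqref{SpF} and \eqref{projker1},
$$
e(x,x,\lambda)=(2\pi)^{-2}\sum_{n\ge0}\int_{|\xi_3|<\lambda/(2n+1)}|\xi_3|\,d\xi_3=\frac{\lambda^2}{4\pi^2}\sum_{n\ge0}\frac{1}{(2n+1)^2}=\frac{\lambda^2}{32},
$$
independently of $x$. So the second equality in \eqref{SLT} holds with no error term. Since $\psi(0)=0$ and $b$ has compact support, we also get ${\rm Tr}\,P_\lambda\psi(B)P_\lambda=\int\psi(b(x))\,e(x,x,\lambda)\,dx$.

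\textbf{Step 2: comparison via \cite{LapSaf}.} For $\psi\in W^{2,\infty}$ I would use the Laptev--Safarov inequality
$$
\big|{\rm Tr}\,P\psi(PBP)P-{\rm Tr}\,P\psi(B)P\big|\le \tfrac12\|\psi''\|_\infty\,\|(I-P)BP\|_{\mathfrak S_2}^2 .
$$
It comes from the second-order Taylor formula, or equivalently from Berezin--Lieb for $\psi\pm\frac12\|\psi''\|_\infty t^2$ combined with the identity ${\rm Tr}\,PB^2P-{\rm Tr}(PBP)^2=\|(I-P)BP\|_2^2$. Moreover
$$
\|(I-P_\lambda)BP_\lambda\|_2^2=\tfrac12\|[B,P_\lambda]\|_2^2=\tfrac12\iint|b(x)-b(y)|^2\,|e(x,y,\lambda)|^2\,dx\,dy .
$$
It therefore remains to show this double integral is $O(\lambda^{3/2})$.

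\textbf{Step 3: the commutator estimate (main obstacle).} I would split $|b(x)-b(y)|^2\le 2|b(x)-b(x',y_3)|^2+2|b(x',y_3)-b(y)|^2$ into the $x'$-increment and the $x_3$-increment.

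For the $x'$-part, use $|b(x)-b(y',x_3)|\le C|x'-y'|$. Then apply Plancherel in $x_3$, which diagonalises in $\xi_3$ and turns the kernel into $\sum_n\Pi'_{\xi_3,n}$. Next use the Gaussian--Laguerre form \eqref{ProjPrm} to compute $\int|x'-y'|^2\,|\sum_{\mu_n<\lambda}\Pi'_{\xi_3,n}(x',y')|^2\,dy'$. This is the second moment of the projector onto the first $N\approx\lambda/|\xi_3|$ Landau levels, which is $O(N/|\xi_3|)\cdot|\xi_3|/(2\pi)$. Integrating over $|\xi_3|\lesssim\lambda$ gives $O(\lambda)$, provided the support of $b$ is used to localise $x$.

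The $x_3$-part is the genuine obstacle. Heisenberg dilation invariance $e(x,y,\lambda)=\lambda^2e(\delta_{\sqrt\lambda}x,\delta_{\sqrt\lambda}y,1)$ suggests a bound $O(\lambda)$. However, the sharp cutoff $|\xi_3|<\lambda/(2n+1)$ makes the kernel decay only like $|x_3-y_3|^{-1}$, so the second moment in $x_3$ diverges. For each fixed Landau level, the problem becomes a one-dimensional commutator of multiplication by $b(\cdot)$ in $x_3$ with the Fourier projection onto the interval $|\xi_3|<\lambda/(2n+1)$, weighted by $|\xi_3|/(2\pi)$ from \eqref{projker1}. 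I would bound it by truncation. For $|x_3-y_3|\le R$, use $|b(x)-b(y)|\le C|x_3-y_3|$. For $|x_3-y_3|>R$, use $|b(x)-b(y)|^2\le 2(|b(x)|^2+|b(y)|^2)$ together with the $|x_3-y_3|^{-2}$ decay of the squared sinc-type kernel. Summing over $n$ with the weights $\lambda^2/(2n+1)^2$ and choosing $R\sim\lambda^{-1/2}$, I expect the two contributions to balance at $O(\lambda^{3/2})$. Checking that the $n$-summation does not produce extra logarithms is the delicate point. If it does, I would instead interpolate with the $C^2$ regularity of $b$, using a second-order Taylor expansion and the cancellation from symmetry of the kernel, to absorb them.

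Combining Steps 1--3 gives \eqref{SLT} with remainder $\frac12\|\psi''\|_\infty\,O(\lambda^{3/2})$.
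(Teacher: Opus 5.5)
\textbf{Verdict: genuine gap in Step 3.} Your Steps 1 and 2 are correct and coincide with the paper (Proposition \ref{prop1} and Theorem \ref{DSide}, where $\Tr D_q=\frac12\|P_\lambda B(I-P_\lambda)\|_{\mathfrak S_2}^2$). So the whole content of the theorem is the estimate $\|P_\lambda B(I-P_\lambda)\|_{\mathfrak S_2}^2=O(\lambda^{3/2})$, and Step 3 does not prove it.

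\textbf{The $x'$-part.} Here the second moment is exact. With $Q_N=\sum_{(2n+1)|\xi_3|<\lambda}\Pi'_{\xi_3,n}$ one has $\int|x'-y'|^2|Q_N(x',y')|^2\,dy'=N/\pi$, and $N\approx\lambda/(2|\xi_3|)$. Hence $\int_{|\xi_3|<\lambda}N\,d\xi_3$ diverges logarithmically at $\xi_3=0$, and localising $x$ does not help. You must also use $|b(x)-b(y',x_3)|\le 2\|b\|_\infty$ together with $\int|Q_N(x',y')|^2dy'=N|\xi_3|/2\pi\le\lambda$. This gives $O(\lambda\log\lambda)$, not $O(\lambda)$; that is harmless for the theorem, but your claim is false as written.

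\textbf{The $x_3$-part (the real gap).} There is no reduction ``for each fixed Landau level'' to a one-dimensional sinc commutator. Integrating $|e(x,y,\lambda)|^2$ over $x'$ gives
\[
(2\pi)^{-2}\iint e^{i(\xi-\eta)(x_3-y_3)}\sum_{(2n+1)|\xi|<\lambda}\ \sum_{(2m+1)|\eta|<\lambda}\bigl(\Pi'_{\eta,m}\Pi'_{\xi,n}\bigr)(y',y')\,d\xi\,d\eta .
\]
Landau projections for different field strengths are not orthogonal, so the terms with $n\ne m$ survive. For instance, $(\Pi'_{\eta,1}\Pi'_{\xi,0})(0,0)=\frac{|\xi||\eta|(|\xi|-|\eta|)}{\pi(|\xi|+|\eta|)^2}\neq 0$. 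Moreover, these diagonal values depend on $y'$, already for $n=m=0$. Multiplication by a function of $x_3$ mixes different $\xi_3$, and hence different levels. So the $|x_3-y_3|^{-2}$ decay you invoke has not been established for the actual kernel.

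\textbf{The count inside your model.} Even within your level-by-level model the estimate fails. The coefficient of the $|x_3-y_3|^{-2}$ tail of level $n$ is of order $\lambda_n=\lambda/(2n+1)$, the value of $|\xi_3|/2\pi$ at the cutoff, not $\lambda_n^2$. With $R\sim\lambda^{-1/2}$ the far part is therefore $\sum_{2n+1\lesssim\sqrt\lambda}\lambda_n/R\sim\lambda^{3/2}\log\lambda$, which is exactly the logarithm you feared.

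\textbf{What the paper does instead.} It never estimates $e(x,y,\lambda)$ pointwise. In Proposition \ref{off_diag} it splits at $\lambda-\mu$.
\begin{itemize}
\item The shell term is computed exactly from the constant diagonal (Lemma \ref{first_term}): $\|P_{\lambda-\mu,\lambda}B\|_{\mathfrak S_2}^2=\frac{2\lambda\mu-\mu^2}{32}\|b\|_{L^2}^2$.
\item The remaining term is bounded by $\|(A-\lambda)^{-1}P_{\lambda-\mu}[A,B]\|_{\mathfrak S_2}$, using a double operator integral with symbol $\frac{t-\lambda}{t-s}\in[0,1]$ (Theorem \ref{1.5}).
\item Since $[A,B]=2M_{X_1b}X_1+2M_{X_2b}X_2+M_{Ab}$ contains only horizontal derivatives of $b$, neither $\partial_{x_3}b$ nor any $x_3$-decay of the kernel ever enters.
\item The Hilbert--Schmidt norms of $M_gm(A)$ and $M_gX_jm(A)$ are evaluated exactly via the ladder identity $\sum_j\widehat X_j\Pi'_{\xi_3,n}\widehat X_j=|\xi_3|\bigl(n\Pi'_{\xi_3,n-1}+(n+1)\Pi'_{\xi_3,n+1}\bigr)$ (Lemma \ref{second_term}). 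This gives $O(\lambda^2/\mu)$.
\item Choosing $\mu=\sqrt\lambda$ balances this against the $O(\lambda\mu)$ shell term.
\end{itemize}
The idea missing from your Step 3 is this trade of $[B,P_\lambda]$ for $[A,B]$ plus spectral localisation.
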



\begin{remark}
The remainder term $O(\lambda^{3/2})$ in the asymptotic formula \eqref{SLT} is likely to be sharp.
\end{remark}

\medskip
\noindent
The inequality \eqref{S_ineq} is sharp and uniform in $\lambda$. It implies the following statement:
\begin{corollary}
If in Theorem \ref{Szego} $\psi$ is a convex function satisfying the conditions in Theorem \ref{SBLY}, then the remainder term in \eqref{SLT} is uniformly negative.
\end{corollary}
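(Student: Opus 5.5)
The statement to prove is the Corollary. If $\psi$ is convex, $\psi\ge0$, $\psi(0)=0$, and $\psi\in W^{2,\infty}$ (so $\int\psi(b)\,dx<\infty$ since $b\in C_0^2$), then the remainder
\[
R(\lambda):={\rm Tr}\,P_\lambda\psi(P_\lambda BP_\lambda)P_\lambda-\frac{\lambda^2}{32}\int_{\mathbb R^3}\psi(b(x))\,dx
\]
is nonpositive for every $\lambda>0$, not only asymptotically. The plan is to compare the left-hand side of \eqref{SLT} with the left-hand side of \eqref{S_ineq} and then apply Theorem~\ref{SBLY} directly.

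The first step identifies the two traces. Put $K=P_\lambda BP_\lambda$, a selfadjoint operator with $K=P_\lambda K P_\lambda$. Write $\mathcal H=P_\lambda L^2\oplus(P_\lambda L^2)^\perp$. Then $K=K_0\oplus 0$, where $K_0$ is the restriction of $K$ to the range of $P_\lambda$, and by the functional calculus
\[
\psi(K)=\psi(K_0)\oplus\psi(0)I=\psi(K_0)\oplus 0,
\]
because $\psi(0)=0$. Hence $P_\lambda\psi(K)P_\lambda=\psi(K)$ as operators, so ${\rm Tr}\,P_\lambda\psi(P_\lambda BP_\lambda)P_\lambda={\rm Tr}\,\psi(P_\lambda BP_\lambda)$, with both sides trace class by Theorem~\ref{SBLY}.

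The second step applies Theorem~\ref{SBLY} with $\varphi=\psi$. Its hypotheses hold: $\psi$ is convex, $\psi\ge0$, $\psi(0)=0$, and $\int\psi(b)\,dx<\infty$ because $b$ has compact support and is bounded while $\psi$ is continuous. This gives
\[
{\rm Tr}\,\psi(P_\lambda BP_\lambda)\le\frac{\lambda^2}{32}\int_{\mathbb R^3}\psi(b(x))\,dx \quad \text{for every } \lambda>0.
\]
Combined with the first step, $R(\lambda)\le0$ for all $\lambda$. By Theorem~\ref{Szego}, $R(\lambda)=O(\lambda^{3/2})$, so the remainder is a nonpositive $O(\lambda^{3/2})$ term, uniformly in $\lambda$.

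The only point needing care is that the remainder is defined relative to the explicit leading term $\frac{\lambda^2}{32}\int\psi(b)$ (the second equality in \eqref{SLT}), not relative to $\int\psi(b)e(x,x,\lambda)\,dx$. This needs no extra work: by \eqref{SpF} and \eqref{projker1}, $e(x,x,\lambda)$ is independent of $x$, and the first equality in \eqref{SLT} computes it as $\lambda^2/32$ exactly, so the two leading terms coincide. Beyond this, I expect no real obstacle. The argument is just a comparison of the two main theorems, with $\psi(0)=0$ used to remove the sandwiching projections.
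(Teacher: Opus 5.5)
Your proposal is correct and follows the paper's own argument. The paper only remarks that the inequality \eqref{S_ineq} of Theorem~\ref{SBLY} holds uniformly in $\lambda$ and compares it with \eqref{SLT}. Your use of $\psi(0)=0$ to identify ${\rm Tr}\,P_\lambda\psi(P_\lambda BP_\lambda)P_\lambda$ with ${\rm Tr}\,\psi(P_\lambda BP_\lambda)$, and your check that $\int\psi(b)\,dx<\infty$, make explicit what the paper leaves implicit. One small attribution point: the exact identity $e(x,x,\lambda)=\lambda^2/32$ comes from the computation in Proposition~\ref{prop1} (via \eqref{SpF} and \eqref{projker1}), not from the first equality in \eqref{SLT}, which only holds up to $O(\lambda^{3/2})$.
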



\section{Some auxiliary results}

\smallskip
\noindent
In this section we present some results that are used for the proofs of the main theorems. Many of them are known and we reprove them for the sake of completeness. The following statement was obtained in \cite{FrLap}
\begin{proposition}\label{prop0} 
Let $\Omega\subset\mathbb R^2$ be a set of finite measure and let $\Pi'_{\xi_3,n}(x',y')$ be the projections defined in
\eqref{ProjPrm}. Then
\begin{equation} \label{projker2}
\int_\Omega  \int_{\mathbb R^2}  \left|\Pi'_{\xi_3,n}(x',y')\right|^2 dx' dy' = \frac{|\xi_3| }{2\pi}  \, |\Omega|.
\end{equation}
\end{proposition}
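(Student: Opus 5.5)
The plan is to use that $\Pi'_{\xi_3,n}$ is an orthogonal projection in $L^2(\mathbb R^2)$ with a kernel that is Hermitian symmetric, $\Pi'_{\xi_3,n}(y',x')=\overline{\Pi'_{\xi_3,n}(x',y')}$. This reduces the double integral to the diagonal value \eqref{projker1}. Fix $x'\in\Omega$. The projection property $\Pi'^2=\Pi'=\Pi'^*$, written at the level of kernels, gives
\begin{equation*}
\int_{\mathbb R^2} \Pi'_{\xi_3,n}(x',y')\,\Pi'_{\xi_3,n}(y',x')\,dy' = \Pi'_{\xi_3,n}(x',x').
\end{equation*}
By Hermitian symmetry the integrand equals $|\Pi'_{\xi_3,n}(x',y')|^2$. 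Hence the inner integral in \eqref{projker2} equals $\Pi'_{\xi_3,n}(x',x')=|\xi_3|/(2\pi)$ by \eqref{projker1}, independently of $x'$. Integrating over $\Omega$ then gives $\frac{|\xi_3|}{2\pi}|\Omega|$, which is \eqref{projker2}.

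The only step needing care is justifying the pointwise kernel identity rather than an identity between operators. Here the explicit formula \eqref{ProjPrm} settles it. For fixed $x'$, the function $y'\mapsto \Pi'_{\xi_3,n}(x',y')$ is a polynomial in $|x'-y'|^2$ times a Gaussian, times a unimodular phase. It is therefore in $L^2(\mathbb R^2)$, and indeed Schwartz, so the integral converges absolutely and the composition of kernels is legitimate. Continuity of the kernel then lets us pass from equality almost everywhere to equality at every diagonal point.

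Alternatively, one can compute directly from \eqref{ProjPrm}. The phase drops out of the modulus, leaving
\begin{equation*}
\int_{\mathbb R^2}|\Pi'|^2\,dy' = \Big(\frac{|\xi_3|}{2\pi}\Big)^2 \int_{\mathbb R^2} e^{-|\xi_3||z|^2/2}L_n(|\xi_3||z|^2/2)^2\,dz.
\end{equation*}
Substituting $t=|\xi_3||z|^2/2$, so that $dz=\frac{2\pi}{2|\xi_3|}\cdot 2\,dt = \frac{2\pi}{|\xi_3|}\,dt$ in polar form, this becomes
\begin{equation*}
\Big(\frac{|\xi_3|}{2\pi}\Big)^2\frac{2\pi}{|\xi_3|}\int_0^\infty e^{-t}L_n(t)^2\,dt=\frac{|\xi_3|}{2\pi},
\end{equation*}
by the orthonormality of the Laguerre polynomials. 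This confirms the constant and serves as a check. I expect no real obstacle; the main point is simply to keep track of the normalization.
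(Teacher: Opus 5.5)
Your argument is correct and follows the paper's proof. Both use $\Pi'_{\xi_3,n}=(\Pi'_{\xi_3,n})^*=(\Pi'_{\xi_3,n})^2$ to reduce the inner integral to the diagonal value $|\xi_3|/(2\pi)$ from \eqref{projker1} and then integrate over $\Omega$; your Hermitian-symmetry identity $|\Pi'_{\xi_3,n}(x',y')|^2=\Pi'_{\xi_3,n}(x',y')\,\Pi'_{\xi_3,n}(y',x')$ is in fact the correct form of the product that the paper's display writes somewhat loosely, and your continuity justification and the direct check via $\int_0^\infty e^{-t}L_n(t)^2\,dt=1$ are sound additions not in the paper.
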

\begin{proof} 
Note that since the operators $\Pi'_{\xi_3,n}$ are orthogonal projections in $L^2(\mathbb R^2)$ we have 
$\Pi'_{\xi_3,n} = (\Pi'_{\xi_3,n})^* $ and $(\Pi'_{\xi_3,n})^2 = \Pi'_{\xi_3,n}$. Then using \eqref{projker1} we find
\begin{multline*}
\int_\Omega \int_{\mathbb R^2}  \left|\Pi'_{\xi_3,n}(y',x')\right|^2 dx' dy' = 
\int_\Omega  \int_{\mathbb R^2} \Pi'_{\xi_3,n}(y',x') \overline{\Pi'_{\xi_3,n}(x',y')} \,dx' dy' \\
= \int_\Omega \Pi'_{\xi_3,n}(y',y') \, dy'
= \frac{|\xi_3|}{2\pi}  \, |\Omega|.
\end{multline*}
\end{proof}

\begin{proposition}\label{prop1} 
Let $B$ be the operator of multiplication by a function $b(x)$ in $L^2(\mathbb R^3)$.
Then, for any real function $\psi$ on $\mathbb R$ such that $\int_{\mathbb R^3} \psi(b(x)) \, dx<\infty$, we have
$$
{\rm Tr}\, P_\lambda \psi (B) P_\lambda = \frac{\lambda^2}{32} \, \int_{\mathbb {R}^3} \psi (b(x)) dx,
$$
where the operator $P_\lambda$ is the spectral projection of $A$.
\end{proposition}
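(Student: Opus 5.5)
The trace equals the integral of $\psi(b(x))$ against the diagonal of the spectral function, and that diagonal is the constant $\lambda^2/32$. The plan is therefore to compute ${\rm Tr}\, P_\lambda \psi(B) P_\lambda$ via the kernel of $P_\lambda$ and then evaluate $e(x,x,\lambda)$ explicitly.

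First, reduce to nonnegative $\psi$. For nonnegative $\psi$ the operator $P_\lambda \psi(B) P_\lambda = (\psi(B)^{1/2}P_\lambda)^*(\psi(B)^{1/2}P_\lambda)$ is nonnegative. Its trace therefore equals the squared Hilbert--Schmidt norm of $\psi(b)^{1/2}P_\lambda$, which is
$$
\int_{\mathbb R^3} \psi(b(x)) \int_{\mathbb R^3} |e(x,y,\lambda)|^2\, dy\, dx .
$$
For general real $\psi$, split $\psi=\psi_+-\psi_-$. The stated hypothesis should be read as $\psi(b)\in L^1$, i.e.\ both parts are integrable, and then linearity gives the identity.

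Second, compute the inner integral. Since $P_\lambda$ is an orthogonal projection, $\int |e(x,y,\lambda)|^2\,dy = e(x,x,\lambda)$, the same argument as in Proposition \ref{prop0}. Alternatively, one can compute it directly. Apply Plancherel in $y_3$ to \eqref{SpF}: for fixed $x',y'$, the function of $y_3$ is the Fourier transform of $\xi_3\mapsto \sum_n \chi_{\{|\xi_3|(2n+1)<\lambda\}}\Pi'_{\xi_3,n}(x',y')$. The $y_3$-integral then becomes
$$
(2\pi)^{-1}\int \Big|\sum_n \chi\,\Pi'_{\xi_3,n}(x',y')\Big|^2 d\xi_3 .
$$
Integrating in $y'$ kills the cross terms $n\ne m$ by orthogonality of the $\Pi'_{\xi_3,n}$. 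The diagonal terms give $\Pi'_{\xi_3,n}(x',x')=|\xi_3|/2\pi$ by \eqref{projker1}. Hence
$$
e(x,x,\lambda)=\frac{1}{(2\pi)^2}\sum_{n\ge0}\int_{|\xi_3|<\lambda/(2n+1)}|\xi_3|\,d\xi_3
=\frac{1}{(2\pi)^2}\sum_{n\ge 0}\frac{\lambda^2}{(2n+1)^2}.
$$
Using $\sum_{n\ge0}(2n+1)^{-2}=\pi^2/8$, this equals $\lambda^2/32$, independent of $x$. Substituting back gives the claim.

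The main obstacle is the careful justification of interchanging sums and integrals and of the orthogonality step when $y'$ ranges over $\mathbb R^2$. I would handle this with Fubini--Tonelli, since all integrands in the diagonal computation are nonnegative. I would also use the fact that the kernel of $\sum_n \chi\,\Pi'_{\xi_3,n}$ is, for fixed $\xi_3$, a finite-rank-in-$n$ sum of projection kernels, so $\int\big|\sum_n\cdot\big|^2dy'$ equals the diagonal of that finite sum. The remaining bookkeeping is to check the normalization constants in \eqref{SpF} and in the Fourier transform.
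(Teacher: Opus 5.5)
Your proposal is correct and follows essentially the same route as the paper: write the trace as $\int_{\mathbb R^3}\psi(b(x))\,e(x,x,\lambda)\,dx$, then evaluate the constant diagonal via \eqref{projker1} and $\sum_{n\ge0}(2n+1)^{-2}=\pi^2/8$ to get $\lambda^2/32$. Your extra justification is splitting $\psi=\psi_+-\psi_-$, identifying ${\rm Tr}\,P_\lambda\psi_\pm(B)P_\lambda$ with $\|\psi_\pm(B)^{1/2}P_\lambda\|_{\mathfrak{S}_2}^2$, and computing $\int|e(x,y,\lambda)|^2\,dy$ by Plancherel and orthogonality of the Landau projections; this is the argument of Lemma \ref{first_term} with $m=\mathbf 1_{[0,\lambda)}$, and it makes rigorous the trace-as-diagonal-integral step that the paper uses without comment.
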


\begin{proof} Indeed, using \eqref{projker1} and \eqref{SpF} we find
\begin{multline*}
{\rm Tr}\, P_\lambda \psi (B) P_\lambda 
= 
\frac{1}{2\pi} \, \sum_n \int_{\mathbb R^3} \underset{|\xi_3|(2n +1) <\lambda}
\int \psi(b(x)) \,  \Pi_{\xi_3,n}(x',x') \, d\xi_3 dx\\
= 
\frac{1}{4\pi^2} \, \sum_n \int_{\mathbb R^3} \underset{|\xi_3|(2n +1) <\lambda} \int \psi(b(x)) \,  |\xi_3| \,d\xi_3 dx.
\end{multline*}
Changing variables we arrive at 
\begin{multline*}
{\rm Tr}\, P_\lambda \psi (B) P_\lambda  =  \frac{1}{2\pi^2} \, \sum_n \frac{1}{(2n+1)^2}\, \int_{\mathbb R^3} \psi (b(x)) dx 
\int_0^\lambda  \eta \, d\eta \\
= 
 \frac{\lambda^2}{4\pi^2} \, \sum_n \frac{1}{(2n+1)^2}\, \int_{\mathbb R^3} \psi (b(x)) dx = 
  \frac{\lambda^2}{4\pi^2} \, \frac{\pi^2}{8} \, \int_{\mathbb R^3} \psi (b(x)) dx. 
\end{multline*}
\end{proof}

\medskip
\noindent
Let $\mathfrak{S}_1 \subset \mathfrak{S}_2$ be the trace and the Hilbert-Schmidt class of operators, respectively in a Hilbert space $\mathcal H$, $P$ be an orthogonal projection in $\mathcal H$ and let $B$ be a selfadjoint operator in $\mathcal H$ whose spectrum we denote by 
$\sigma(B)$. Let us introduce the set $\mathcal{I}$ such that 
$$
\mathcal{I} = \underset{0\le t\le1}\cup  t\,\sigma(B)\ \subset\ \mathbb R,
$$ 
where $\sigma(B)$ is the spectrum of $B$. Then the spectra of operators $B$ and $PBP$ lie in 
$\mathcal{I}$.

\noindent
We now recall the well-known Berezin-Lieb inequality for convex functions (see
\cite{Bz1}, \cite{Bz2}, \cite{Lieb}). The most general case  of this inequality was proved in \cite{LapSaf}.

\begin{proposition}\label{B-L}
Let $\varphi\ge0$ be a convex function on $\mathcal{I}$ with $\varphi(0) = 0$ if $0\in\mathcal{I}$. Assume that $PB$ is a compact operator in  $\mathcal H$ and 
$P\varphi(B)P  \in \mathfrak{S}_1$.
Then 
$$
{\rm Tr}\, P\varphi(B)P \ge {\rm Tr}\, P\varphi(PBP) P.
$$
\end{proposition}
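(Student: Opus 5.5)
The plan is to reduce the statement to the scalar Jensen inequality, using the spectral theorem for $PBP$ and the fact that $\varphi(0)=0$ handles the part of $PBP$ living on $(I-P)\mathcal H$.

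First, write $PBP = P(PBP)P$ on $\mathcal H$ and note that $PBP$ vanishes on $(I-P)\mathcal H$. Since $PB$ is compact, $PBP=(PB)P$ is a compact selfadjoint operator. Hence it has a spectral decomposition
$$
PBP=\sum_k \mu_k\,(\cdot,e_k)e_k ,
$$
with an orthonormal system $\{e_k\}$ and real $\mu_k\neq 0$. Each $\mu_k=(Be_k,e_k)$, so $\mu_k\in\mathcal I$ (indeed in the closed convex hull of $\sigma(B)$). Because $PBP e_k=\mu_k e_k$ with $\mu_k\neq0$, we have $e_k\in P\mathcal H$. Complete $\{e_k\}$ by an orthonormal basis $\{f_j\}$ of $\ker(PBP)\cap P\mathcal H$. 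Then $\varphi(PBP)$ acts on $P\mathcal H$ diagonally, with eigenvalues $\varphi(\mu_k)$ on the $e_k$ and $\varphi(0)=0$ on the $f_j$. It follows that
$$
{\rm Tr}\,P\varphi(PBP)P=\sum_k \varphi(\mu_k),
$$
a sum of nonnegative terms whose finiteness is not needed in advance. On the other side,
$$
{\rm Tr}\,P\varphi(B)P=\sum_k(\varphi(B)e_k,e_k)+\sum_j(\varphi(B)f_j,f_j)\ \ge\ \sum_k(\varphi(B)e_k,e_k),
$$
because $\varphi\ge0$ makes every diagonal term nonnegative. Here the trace class assumption guarantees that the sum over any orthonormal basis of $P\mathcal H$ equals the trace.

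The key step is then the scalar Jensen inequality $(\varphi(B)e,e)\ge\varphi((Be,e))$ for unit vectors $e$. To prove it, let $d\nu_e$ be the spectral measure of $B$ associated with $e$; it is a probability measure on $\sigma(B)$. Then $(Be,e)=\int t\,d\nu_e$ and $(\varphi(B)e,e)=\int\varphi(t)\,d\nu_e$, so Jensen's inequality for the convex function $\varphi$ gives the claim. This requires $\varphi$ to be convex on an interval containing $\sigma(B)$ and the means $\mu_k$. I expect the main obstacle to be exactly this domain issue, together with integrability when $B$ is unbounded. Convexity is only assumed on $\mathcal I$, which is star-shaped about $0$ but need not be convex when $\sigma(B)$ has gaps on one side of $0$. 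For the means $\mu_k$ to lie where $\varphi$ is convex, I would extend $\varphi$ to the convex hull of $\mathcal I$ by its convex envelope, or note that in the applications $\mathcal I$ is an interval. For unbounded $B$, I would either truncate $B$ spectrally or use supporting lines $\varphi(t)\ge\varphi(\mu)+c(t-\mu)$ at $\mu=(Be,e)$, which avoids any integrability issue beyond $e\in D(|B|^{1/2})$. The supporting-line argument is the version I would try first.

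Combining the two steps gives
$$
{\rm Tr}\,P\varphi(B)P\ \ge\ \sum_k(\varphi(B)e_k,e_k)\ \ge\ \sum_k\varphi(\mu_k)={\rm Tr}\,P\varphi(PBP)P,
$$
which is the asserted inequality. As a byproduct, $P\varphi(PBP)P\in\mathfrak S_1$.
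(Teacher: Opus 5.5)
Your proof is correct and follows the paper's argument. Like the paper, you diagonalize the compact operator $PBP$ on $P\mathcal H$, apply Jensen's inequality to the spectral measure of $B$ in each eigenvector, and sum; the only difference is that you split off the kernel vectors and drop them using $\varphi\ge0$, whereas the paper applies Jensen to them as well. The domain obstacle you anticipate does not arise: a subset of $\mathbb R$ that is star-shaped about $0$ is an interval, so $\mathcal I$ is always the closed convex hull of $\sigma(B)\cup\{0\}$ and contains every mean $(Be,e)$ with $\|e\|=1$.
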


\noindent
Here we give the proof of this statement for completeness. 

\begin{proof}
Let $\eta_k$ be an orthonormal basis in $P\mathcal H$ formed by the eigenvectors $\eta_k$  of
the compact selfadjoint operator $PBP$. Denote by $E_B$ the spectral measure of the operator $B$. 
Then 
\begin{multline*}
(P\varphi(PBP) P \eta_k, \eta_k) = \left(\varphi(PBP)  \eta_k, \eta_k\right) = \varphi\left((PBP \eta_k,\eta_k)\right)
\\
= \varphi\left((B \eta_k,\eta_k)\right) = \varphi\left(\int \mu\, \left( d E_B(\mu) \eta_k, \eta_k\right) \right).
\end{multline*}
Since $\|\eta_k\| = 1$ we find
$$
\int  d(E_B(\mu) \eta_k, \eta_k) = \|\eta_k\|^2 = 1
$$ 
and using Jensen's inequality we conclude
\begin{multline*}
\left(P\varphi(B)P\eta_k,\eta_k\right) - \left(P\varphi(PBP) P \eta_k,\eta_k\right) 
= \left(\varphi(B)\eta_k,\eta_k\right) - \left(\varphi(PBP)  \eta_k,\eta_k\right) \\
=
\int \varphi(\mu)\, \left( d E_B(\mu) \eta_k, \eta_k\right) - \varphi\left(\int \mu\, \left( d E_B(\mu) \eta_k, \eta_k\right)\right) \ge0.
\end{multline*}
Summing these inequalities we complete the proof.
\end{proof}

\medskip
\noindent
The inequality obtained in Proposition \ref{B-L} can be extended to a two-sided estimate.
From the Berezin-Lieb inequality one can deduce the following result (see \cite{LapSaf}).

\begin{theorem}\label{DSide}
Let $PB\in\mathfrak{S}_2$. Then for any
function $\psi\in W^{2,\infty}(\mathcal{I})$
$$
P\psi(B)P-P\psi(PBP)P\in\mathfrak{S}_1
$$
and
\begin{equation}\label{DoubSide}
\left|\,\Tr\left(P\psi(B)P-P\psi(PBP)P\right)\,\right|
\ \le\ \frac 12\,\|\psi''\|_{L^\infty(\mathcal{I})}\,\|PB(I-P)\|_{\mathfrak{S}_2}^2\,.
\end{equation}
\end{theorem}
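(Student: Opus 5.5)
The plan is to deduce the two-sided bound from the one-sided Berezin--Lieb inequality of Proposition \ref{B-L}, applied to the convex functions $\varphi_\pm(t)=\frac12\|\psi''\|_{L^\infty(\mathcal I)}t^2\pm\psi(t)$ (suitably corrected), and to compute the quadratic case exactly. Write $M=\|\psi''\|_{L^\infty(\mathcal I)}$. As a first reduction I replace $\psi$ by $\tilde\psi(t)=\psi(t)-\psi(0)-\psi'(0)t$. The constant term contributes $\psi(0)P-\psi(0)P=0$. The linear term contributes $\psi'(0)(PBP-P\,PBP\,P)=0$. Hence the difference $P\psi(B)P-P\psi(PBP)P$ is unchanged. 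So I may assume $\psi(0)=\psi'(0)=0$, and Taylor's formula then gives $|\psi(t)|\le \frac M2 t^2$ on $\mathcal I$. This interval is convex and contains $0$ whenever $\sigma(B)\neq\emptyset$.

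\emph{The quadratic case.} For $q(t)=t^2$ we get
$$P q(B)P-Pq(PBP)P=PB^2P-PBPBP=PB(I-P)BP=(PB(I-P))(PB(I-P))^*.$$
This operator is nonnegative and lies in $\mathfrak S_1$, since $PB(I-P)\in\mathfrak S_2$ because $PB\in\mathfrak S_2$. Its trace is exactly $\|PB(I-P)\|_{\mathfrak S_2}^2$. Note also that $PB\in\mathfrak S_2$ implies $PB^2P=(PB)(PB)^*\in\mathfrak S_1$. Moreover $PB$ is compact, which Proposition \ref{B-L} requires.

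\emph{The convex functions.} Set $\varphi_\pm=\frac M2 q\pm\psi$. Each is convex on $\mathcal I$ because $\varphi_\pm''=M\pm\psi''\ge0$ a.e. and $\psi\in W^{2,\infty}$. Each vanishes at $0$ and is nonnegative, since $\varphi_\pm(t)\ge \frac M2t^2-|\psi(t)|\ge0$. For trace class, we have $0\le P\varphi_\pm(B)P\le M\,PB^2P\in\mathfrak S_1$, using $|\psi|\le\frac M2q$ and positivity of $\varphi_\pm(B)$. Proposition \ref{B-L} therefore applies and gives
$$\Tr P\varphi_\pm(B)P-\Tr P\varphi_\pm(PBP)P\ge0.$$
The same domination shows $P\psi(B)P\in\mathfrak S_1$. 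Since $PBP\in\mathfrak S_1$, its spectrum is summable, so $P\psi(PBP)P\in\mathfrak S_1$ as well. Hence the difference $D_\psi$ lies in $\mathfrak S_1$, which is the first claim. By linearity,
$$\Tr D_{\varphi_\pm}=\frac M2\|PB(I-P)\|_{\mathfrak S_2}^2\pm\Tr D_\psi\ge0,$$
and the two signs together give \eqref{DoubSide}.

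\emph{Main obstacle.} The delicate point is justifying the trace manipulations: that each separate trace is finite and that linearity of the trace is legitimate. The domination arguments above handle this. One also has to check that the Jensen step in Proposition \ref{B-L} uses only the convexity of $\varphi$ on $\mathcal I$, which contains the spectral supports of $B$ and $PBP$. If $\mathcal I$ were not an interval, one would first extend $\psi$ from $\mathcal I$; but $\mathcal I$ is star-shaped about $0$, hence an interval, so this issue does not arise.
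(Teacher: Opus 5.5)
Your proposal is correct and is essentially the paper's proof: you reduce to $\psi(0)=\psi'(0)=0$, dominate $\psi$ by $\frac M2 t^2$, apply Proposition~\ref{B-L} to $\varphi_\pm=\frac M2 t^2\pm\psi$, and evaluate the quadratic case exactly as $\Tr PB(I-P)BP=\|PB(I-P)\|_{\mathfrak S_2}^2$. Your remark that no convex extension is needed because $\mathcal I$ is an interval is also right. One slip: $PB\in\mathfrak S_2$ gives only $PBP\in\mathfrak S_2$, not $\mathfrak S_1$ (take $P=I$, $B=\mathrm{diag}(1/k)$), so justify $P\psi(PBP)P\in\mathfrak S_1$ instead via $|\psi(t)|\le \frac M2 t^2$ and $(PBP)^2=PBPBP\in\mathfrak S_1$, as the paper does.
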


\begin{proof}
Set
\[
D_\psi := P\psi(B)P-P\psi(PBP)P.
\]
If $\ell(t)=\alpha t+\beta$ is affine, then $D_\ell=0$. Hence we may replace $\psi$ by $\psi-\psi(0)-\psi'(0)t$ and assume from now on that
\[
\psi(0)=\psi'(0)=0.
\]
Let
\[
q(t)=\frac{t^2}{2}, \qquad M=\|\psi''\|_{L^\infty(\mathcal{I})}.
\]
By Taylor's formula,
\[
|\psi(t)|=\left|\int_0^t(t-s)\psi''(s)ds \right|\le  Mq(t), \qquad t\in \mathcal{I}.
\]
Since $B$ is self-adjoint, this pointwise estimate can be transferred to an operator estimate. Applying $P$ we get 
$$-MPq(B)P\le P\psi(B)P\leq MPq(B)P,$$
similarly for $Pq(PBP)P$.
Since $PB\in\mathfrak{S}_2$, one has
\[
Pq(B)P=\frac12 PB^2P\in\mathfrak{S}_1,
\qquad
q(PBP)=\frac12 PBPBP\in\mathfrak{S}_1.
\]
Therefore $P\psi(B)P$ and $P\psi(PBP)P$ are also trace class.

\noindent
Now define
\[
\varphi_\pm(t)=Mq(t)\pm \psi(t).
\]
Because $\varphi_\pm''(t)=M\pm\psi''(t)\ge0$, both functions are convex on $\mathcal{I}$. After extending $\varphi_\pm(t)$ convexly from $\mathcal{I}$ to $\mathbb{R}$, we apply the Berezin--Lieb inequality to $\varphi_+$ and $\varphi_-$ and obtain
\[
0\le \Tr D_{\varphi_\pm}
= M\Tr D_q \pm \Tr D_\psi.
\]
Hence
\[
|\Tr D_\psi|\le M\Tr D_q.
\]
It remains to compute $\Tr D_q$. Since $q(t)=t^2/2$,
\begin{align*}
\Tr D_q
&=\frac12\Tr\bigl(PB^2P-PBPBP\bigr) \\
&=\frac12\Tr\bigl(PB(I-P)BP\bigr)
 =\frac12\|PB(I-P)\|_{\mathfrak{S}_2}^2.
\end{align*}
This proves the theorem.
\end{proof}

\medskip
\noindent
Let $A$ be a selfadjoint, semibounded from below operator in a Hilbert space $\mathcal H$ with a domain $\mathcal{D}(A)$, and let $E$ be its spectral measure. Denote 
$$
P_\lambda\ =\ E\left((-\infty,\lambda)\right)\,,\qquad
P_{\mu,\lambda}\ =\ P_\lambda -P_\mu\ =\ E\left([\mu,\lambda)\right)\,,\quad \mu\le\lambda\,.
$$ 
We shall now obtain an estimate of the Hilbert--Schmidt norm appearing 
in the right hand side of \eqref{DoubSide} with $P=P_\lambda$ and $I-P=I-P_\lambda=P_{\lambda,\infty}$. Without loss of generality we assume that $A>0$; this can always be achieved by adding a sufficiently large positive constant to $A$.

\begin{theorem}\label{1.5}
Let the operator $B$ be the same as in Theorem \ref{DSide}. Assume that $A>0$ 
and $P_\lambda BA\in {\mathfrak{S}_2}$.
Then for all $\psi\in W^{2,\infty}(\mathcal{I})$ and $\lambda> \mu>0$ we have
    \begin{align*}
\|P_{\lambda-\mu}BP_{\lambda,\infty}\|_{\mathfrak{S}_2}
\le \|(A-\lambda)^{-1}P_{\lambda-\mu}[A,B]\|_{\mathfrak{S}_2}, 
    \end{align*}
and
\begin{multline}\label{Db_S_Est}
\left| {\rm Tr}\, \left( P_\lambda \psi(B) P_\lambda -  P_\lambda\psi(P_\lambda B P_\lambda) P_\lambda \right)\right|\\
\le 
 \frac12\,\|\psi''\|_{L^\infty(\mathcal{I})}\, \left(\|P_{\lambda-\mu,\lambda}B\|_{\mathfrak{S}_2}^2
\ +\ \|(A-\lambda)^{-1}P_{\lambda-\mu}[A,B]\|_{\mathfrak{S}_2}^2\right).
\end{multline} 
\end{theorem}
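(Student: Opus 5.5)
The plan is to feed the two-sided estimate of Theorem \ref{DSide} with $P=P_\lambda$, and then bound $\|P_\lambda B(I-P_\lambda)\|_{\mathfrak{S}_2}$ by separating the spectral window near $\lambda$ from the part of $P_\lambda$ lying well below $\lambda$. Write
$$
P_\lambda = P_{\lambda-\mu} + P_{\lambda-\mu,\lambda}.
$$
Since $P_{\lambda-\mu}$ and $P_{\lambda-\mu,\lambda}$ have orthogonal ranges, the Hilbert--Schmidt norm squared splits exactly:
$$
\|P_\lambda B P_{\lambda,\infty}\|_{\mathfrak{S}_2}^2 = \|P_{\lambda-\mu} B P_{\lambda,\infty}\|_{\mathfrak{S}_2}^2 + \|P_{\lambda-\mu,\lambda} B P_{\lambda,\infty}\|_{\mathfrak{S}_2}^2 .
$$
For the second term I use $\|P_{\lambda-\mu,\lambda}BP_{\lambda,\infty}\|_{\mathfrak{S}_2}\le \|P_{\lambda-\mu,\lambda}B\|_{\mathfrak{S}_2}$, since $P_{\lambda,\infty}$ is a contraction. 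Inserting the result into \eqref{DoubSide} yields \eqref{Db_S_Est}, provided the first inequality of the theorem holds.

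The first inequality, for the low-energy block, is where the commutator enters. Heuristically, the identity is
$$
[A,B] = AB - BA, \qquad P_{\lambda-\mu}[A,B]P_{\lambda,\infty} = P_{\lambda-\mu}(AB-BA)P_{\lambda,\infty}.
$$
A cleaner route is through the spectral representation. Writing $B$ between $E(ds)$ on the left, with $s<\lambda-\mu$, and $E(dt)$ on the right, with $t\ge\lambda$, the commutator block is the block of $B$ multiplied by $s-t$. Here $|s-t|\ge t-\lambda$ and $|s-t|\ge \lambda-s>\mu$. Hence one expects
$$
\|P_{\lambda-\mu}BP_{\lambda,\infty}\|_{\mathfrak{S}_2} \le \|(A-\lambda)^{-1}P_{\lambda-\mu}[A,B]P_{\lambda,\infty}\|_{\mathfrak{S}_2},
$$
since on the range of $P_{\lambda-\mu}$ the factor $(A-\lambda)^{-1}$ is multiplication by $(s-\lambda)^{-1}$, and $|s-\lambda|\le|s-t|$.

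To make this rigorous, I would use
$$
(A-\lambda)P_{\lambda-\mu}B P_{\lambda,\infty} - P_{\lambda-\mu}B(A-\lambda)P_{\lambda,\infty} = P_{\lambda-\mu}[A,B]P_{\lambda,\infty},
$$
and multiply on the left by $(A-\lambda)^{-1}P_{\lambda-\mu}$, which is bounded with norm at most $\mu^{-1}$. This gives
$$
X - (A-\lambda)^{-1}P_{\lambda-\mu}\, X\, (A-\lambda)P_{\lambda,\infty} = (A-\lambda)^{-1}P_{\lambda-\mu}[A,B]P_{\lambda,\infty},
$$
where $X=P_{\lambda-\mu}BP_{\lambda,\infty}$. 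Call the left-hand map $X\mapsto X - L X R$, with $L=(A-\lambda)^{-1}P_{\lambda-\mu}\le 0$ and $R=(A-\lambda)P_{\lambda,\infty}\ge 0$ commuting with the relevant projections.

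In the Hilbert--Schmidt space this map acts as multiplication by $1-(s-\lambda)^{-1}(t-\lambda)=(s-t)/(s-\lambda)$. Because $s<\lambda\le t$, this multiplier has modulus at least $1$, so the map is expansive in $\mathfrak{S}_2$ and $\|X\|_{\mathfrak{S}_2}$ is bounded by the norm of the right-hand side. The cleanest way to justify this is via double operator integrals, or by approximating $E$ with finitely many spectral blocks, where the statement is an elementwise matrix inequality. Finally, drop $P_{\lambda,\infty}$, a contraction, to arrive at the stated form.

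The main obstacle is domain and justification. One must make sense of $[A,B]$ and of $(A-\lambda)^{-1}P_{\lambda-\mu}[A,B]$ as a Hilbert--Schmidt operator, for example via $P_\lambda BA\in\mathfrak{S}_2$ and $A>0$. One must also justify the multiplier argument when $A$ has continuous spectrum, which is the case for the Heisenberg Laplacian. I would handle this by discretising the spectral measure into blocks $E(\Delta_j)$, applying the block-wise inequality, and passing to the limit by monotone convergence of the Hilbert--Schmidt norms.
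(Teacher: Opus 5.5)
Your proposal is correct and follows essentially the same route as the paper. Both arguments insert the orthogonal splitting $P_\lambda=P_{\lambda-\mu}+P_{\lambda-\mu,\lambda}$ into Theorem~\ref{DSide}, bound the window term trivially by $\|P_{\lambda-\mu,\lambda}B\|_{\mathfrak{S}_2}$, and handle the low block $P_{\lambda-\mu}BP_{\lambda,\infty}$ with a double operator integral. Your ``expansive'' symbol $(s-t)/(s-\lambda)$ is the reciprocal of the paper's $f_\mu=(t-\lambda)/(t-s)\in[0,1]$ (with the roles of the variables swapped), and the paper recovers $X$ from the commutator term by applying Peller's $\mathfrak{S}_2$ bound for bounded symbols to $f_\mu$, which is the same step as yours.
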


\noindent
For the proof of Theorem \ref{1.5} we need the following statement from \cite{Peller}.
\begin{lemma}\label{double_operator_integral_estimate}
    If $E$ is the spectral measure of $A$ and $T\in\mathfrak{S}_2$, then for every bounded measurable $f$ on $\mathbb{R}^2$ one has
\[
\left\|\iint f(t,s)\,dE(t)\,T\,dE(s)\right\|_{\mathfrak{S}_2}\le \|f\|_{L^\infty(\mathbb{R}^2)}\,\|T\|_{\mathfrak{S}_2}.
\]
\end{lemma}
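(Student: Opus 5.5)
The estimate will follow once the double operator integral is realized as a multiplication operator on a Hilbert space. The key observation is that $\mathfrak{S}_2$, with the inner product $\langle X,Y\rangle=\Tr(Y^*X)$, is itself a Hilbert space. For Borel sets $\delta,\sigma\subset\mathbb R$ I define operators on $\mathfrak{S}_2$ by
$$
\mathcal L_\delta X=E(\delta)X,\qquad \mathcal R_\sigma X=XE(\sigma).
$$
Each is an orthogonal projection on $\mathfrak{S}_2$: it is idempotent, and it is self-adjoint because $\Tr\bigl(Y^*E(\delta)X\bigr)=\Tr\bigl((E(\delta)Y)^*X\bigr)$, and similarly for $\mathcal R_\sigma$. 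Moreover every $\mathcal L_\delta$ commutes with every $\mathcal R_\sigma$. On rectangles I then set
$$
\mathfrak E(\delta\times\sigma)=\mathcal L_\delta\mathcal R_\sigma,
$$
which is again an orthogonal projection on $\mathfrak{S}_2$.

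The main step is to show that $\mathfrak E$ extends to a projection-valued measure on the Borel sets of $\mathbb R^2$; once that is done, the double operator integral is by definition
$$
\iint f(t,s)\,dE(t)\,T\,dE(s)=\Bigl(\int_{\mathbb R^2} f\,d\mathfrak E\Bigr)T .
$$
I expect this extension to be the main obstacle, and I would avoid abstract measure-extension arguments by working in a concrete model.

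By the spectral theorem, $\mathcal H$ is unitarily equivalent to $\bigoplus_j L^2(\mathbb R,\mu_j)$, with $E(\delta)$ acting as multiplication by $\chi_\delta$. A Hilbert--Schmidt operator $T$ then corresponds to a matrix of kernels $K_{jk}(t,s)\in L^2(\mu_j\otimes\mu_k)$, and
$$
\|T\|_{\mathfrak{S}_2}^2=\sum_{j,k}\|K_{jk}\|^2_{L^2(\mu_j\otimes\mu_k)} .
$$
In this picture $E(\delta)\,T\,E(\sigma)$ has kernels $\chi_\delta(t)\chi_\sigma(s)K_{jk}(t,s)$. Consequently, for a finite linear combination $f=\sum c_{m}\chi_{\delta_m\times\sigma_m}$ of rectangles, the double operator integral acts as multiplication of each kernel by $f(t,s)$. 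The natural definition for an arbitrary bounded Borel $f$ is therefore
$$
K_{jk}\longmapsto f\,K_{jk}.
$$
This map is a $*$-homomorphism from bounded Borel functions into $B(\mathfrak{S}_2)$, it agrees with the rectangle definition, and it is continuous under bounded pointwise convergence by dominated convergence. So it coincides with $\int f\,d\mathfrak E$, and countable additivity of $\mathfrak E$ comes for free.

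The estimate is then immediate:
$$
\Bigl\|\iint f\,dE\,T\,dE\Bigr\|_{\mathfrak{S}_2}^2=\sum_{j,k}\int |f|^2|K_{jk}|^2\,d\mu_j\,d\mu_k\le \|f\|_{L^\infty}^2\,\|T\|_{\mathfrak{S}_2}^2 .
$$
The essential supremum here is taken with respect to the product spectral measures $\mu_j\otimes\mu_k$, so it is bounded by $\sup|f|$. The only routine checks remaining are that $K_{jk}\mapsto fK_{jk}$ does not depend on the chosen spectral representation, which follows from uniqueness of the extension off rectangles, and that the resulting kernels define a Hilbert--Schmidt operator, which is clear from the $L^2$ bound.
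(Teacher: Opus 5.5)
Your argument is correct. The paper gives no proof of this lemma; it simply quotes it from Peller, where it is the basic $\mathfrak{S}_2$ case of the Birman--Solomyak theory of double operator integrals.

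What you have written is that theory's construction. Left and right multiplication by $E(\cdot)$ are commuting projection-valued measures on the Hilbert space $\mathfrak{S}_2$, their product is a spectral measure $\mathfrak E$ on $\mathbb{R}^2$, and the double operator integral is $\bigl(\int f\,d\mathfrak E\bigr)T$. The one difference is how you handle the only nontrivial step, the extension of $\mathfrak E$ from rectangles to all Borel sets. You do it with the explicit kernel model $K_{jk}\in L^2(\mu_j\otimes\mu_k)$ rather than the abstract theorem on products of commuting spectral measures. This makes the proof self-contained and turns the norm bound into a one-line computation. The citation instead places the lemma in a framework that also covers other operator ideals, where bounded $f$ no longer suffices; the paper does not need that generality.

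Your closing remark about the essential supremum should be part of the statement, not a side comment. What you prove is the bound with $\sup|f|$, or the $\mathfrak E$-essential supremum, and that is the correct form of the lemma. Read literally with the Lebesgue essential supremum on $\mathbb{R}^2$, the inequality fails for general $A$. Take eigenvalues $t_0,s_0$ and $f=\chi_{\{(t_0,s_0)\}}$. Then $\|f\|_{L^\infty(\mathbb{R}^2)}=0$, but the double operator integral equals $E(\{t_0\})TE(\{s_0\})$, which need not vanish. This does not affect the paper, because the function $f_\mu$ used in the proof of Theorem \ref{1.5} satisfies $0\le f_\mu\le 1$ pointwise.
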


\begin{proof}[Proof of Theorem 2.2]
    Applying Theorem \ref{DSide} with $P=P_\lambda$ we only need to estimate
\[
\|P_\lambda BP_{\lambda,\infty}\|_{\mathfrak{S}_2}^2.
\]
By orthogonality of the spectral projections,
\[
\|P_\lambda BP_{\lambda,\infty}\|_{\mathfrak{S}_2}^2
=\|P_{\lambda-\mu,\lambda}BP_{\lambda,\infty}\|_{\mathfrak{S}_2}^2
 +\|P_{\lambda-
\mu}BP_{\lambda,\infty}\|_{\mathfrak{S}_2}^2.
\]
The first term is bounded by $\|P_{\lambda-\mu,\lambda}B\|_{\mathfrak{S}_2}^2$, so it remains to estimate the second one.

\noindent
Let
\[
\chi(t)=\mathbf 1_{(-\infty,\lambda)}(t),
\qquad
\chi_\mu(t)=\mathbf 1_{(-\infty,\lambda-\mu)}(t).
\]
Since
\[
P_{\lambda-\mu}[A,B]P_{\lambda,\infty}
=\iint \chi_\mu(t)(1-\chi(s))(t-s)\,dE(t)B\,dE(s),
\]
we have
\[
(A-\lambda)^{-1}P_{\lambda-\mu}[A,B]P_{\lambda,\infty}
=\iint \chi_\mu(t)(1-\chi(s))\frac{t-s}{t-\lambda}\,dE(t)B\,dE(s).
\]
Now define
\[
f_\mu(t,s)=\chi_\mu(t)(1-\chi(s))\frac{t-\lambda}{t-s}.
\]
On the support of $f_\mu$ one has $t<\lambda-\mu<\lambda\le s$, hence $0\le f_\mu(t,s)\le1$. Therefore,
\begin{align*}
P_{\lambda-\mu}BP_{\lambda,\infty}
&=\iint f_\mu(t,s)\,dE(t)
\bigl((A-\lambda)^{-1}P_{\lambda-\mu}[A,B]P_{\lambda,\infty}\bigr)
\,dE(s),
\end{align*}
so Lemma $\ref{double_operator_integral_estimate}$ yields
\begin{multline}\nonumber
\|P_{\lambda-\mu}BP_{\lambda,\infty}\|_{\mathfrak{S}_2}
\le \|(A-\lambda)^{-1}P_{\lambda-\mu}[A,B]P_{\lambda,\infty}\|_{\mathfrak{S}_2}\\
\le \|(A-\lambda)^{-1}P_{\lambda-\mu}[A,B]\|_{\mathfrak{S}_2}.
\end{multline}
Substituting this bound into the estimate from Theorem \ref{DSide} proves the claim.
\end{proof}
%


\section{Proofs of the main results }

\subsection{ Proof of Theorem \ref{SBLY}}
Let ${P}_\lambda$ be the orthogonal projection  whose kernel $e(x,y,\lambda)$ is defined in 
\eqref{SpF}
\begin{equation*} 
e(x,y,\lambda) =(2\pi)^{-1} \sum_n \underset{|\xi_3|(2n+1)<\lambda}\int \Pi_{\xi_3,n}(x',y') e^{i\xi_3(x_3-y_3)} d\xi_3.
\end{equation*}
The proof reduces to applying the Berezin-Lieb inequality. The operator $P_\lambda B\in \mathfrak{S}_2$ and therefore is compact.
Then by applying Proposition \ref{B-L} 
we find 
$$
{\rm Tr}\, \varphi (P_\lambda BP_\lambda)  \le  {\rm Tr}\, P_\lambda \varphi (B) P_\lambda.
$$
For the right hand side in the latter inequality we use  Proposition \ref{prop1} 
that holds even for functions that are not necessarily convex and obtain
$$
 {\rm Tr}\, P_\lambda \varphi (B) P_\lambda = \frac{\lambda^2}{32} \, \int_{\mathbb {R}^3} \varphi (b(x)) dx.
$$
The proof is complete.

\subsection{ Proof of Theorem \ref{Szego}}

In order to prove Theorem \ref{Szego} we need the following statements. 

\medskip
\noindent
For a function $g$, denote by $M_g$ the multiplication operator, and $K_g$ its kernel.

\begin{lemma}\label{first_term}
    Let $m:[0,\infty)\to\mathbb{C}$ be bounded and compactly supported. Then for every $g\in L^2(\mathbb{R}^3)$,
    $$\|M_gm(A)\|^2_{\mathfrak{G}_2}=\frac{1}{16}\left(\int_0^\infty |m(\eta)|^2\eta d\eta\right)\|g\|^2_{L^2(\mathbb{R}^3)}.$$
\end{lemma}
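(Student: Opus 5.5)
The plan is to write the Hilbert--Schmidt norm as an integral of the squared kernel and then use the explicit spectral decomposition of $A$. Since $m(A)$ is a function of $A$, after the partial Fourier transform $\mathcal F_3$ it acts as
$$
\mathcal F_3 m(A)\mathcal F_3^* = \sum_{n=0}^\infty m\bigl(|\xi_3|(2n+1)\bigr)\,\Pi_{\xi_3,n}.
$$
So, exactly as in \eqref{SpF}, its kernel is
$$
K_{m(A)}(x,y)=(2\pi)^{-1}\sum_n \int m\bigl(|\xi_3|(2n+1)\bigr)\,\Pi'_{\xi_3,n}(x',y')\,e^{i\xi_3(x_3-y_3)}\,d\xi_3 .
$$
The kernel of $M_g m(A)$ is then $g(x)K_{m(A)}(x,y)$. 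Hence
$$
\|M_g m(A)\|_{\mathfrak S_2}^2=\int |g(x)|^2\Bigl(\int |K_{m(A)}(x,y)|^2\,dy\Bigr)\,dx .
$$
It therefore suffices to show that $\int |K_{m(A)}(x,y)|^2dy$ is independent of $x$ and equals $\frac1{16}\int_0^\infty |m(\eta)|^2\eta\, d\eta$.

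For this, I would use that $m(A)$ is bounded and normal, so $\int |K_{m(A)}(x,y)|^2dy$ is the diagonal value of the kernel of $m(A)m(A)^*=|m|^2(A)$. Since $|m|^2$ is bounded and compactly supported, this kernel has the same form as above with $m$ replaced by $|m|^2$. At $y=x$ I would use \eqref{projker1}, $\Pi'_{\xi_3,n}(x',x')=|\xi_3|/(2\pi)$, and the orthogonality of the $\Pi'_{\xi_3,n}$ in $n$, which is what lets the cross terms disappear. This gives
$$
\frac1{4\pi^2}\sum_n\int_{\mathbb R}|m(|\xi_3|(2n+1))|^2|\xi_3|\,d\xi_3
=\frac1{2\pi^2}\sum_n\frac1{(2n+1)^2}\int_0^\infty |m(\eta)|^2\eta\,d\eta ,
$$
by the same change of variables $\eta=|\xi_3|(2n+1)$ as in the proof of Proposition \ref{prop1}. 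Using $\sum_n (2n+1)^{-2}=\pi^2/8$, this equals $\frac1{16}\int_0^\infty|m|^2\eta\,d\eta$. As a check, taking $m=\mathbf 1_{[0,\lambda)}$ gives $\lambda^2/32$, in agreement with Proposition \ref{prop1}.

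The main obstacle is making the diagonal evaluation rigorous, since it formally requires a pointwise kernel identity. An alternative that avoids pointwise diagonal values is to compute directly with Plancherel in $x_3$ and then Proposition \ref{prop0}-type identities in $x'$. In that route I would first take $g$ to be an indicator function $\mathbf 1_{\Omega'\times I}$, or a general $L^2$ function after taking the partial Fourier transform in $x_3$. Integrating $|K_{m(A)}(x,y)|^2$ over $y_3$ with Plancherel gives $(2\pi)^{-1}\int |\sum_n m(\cdot)\Pi'_{\xi_3,n}(x',y')|^2d\xi_3$. Integrating over $y'$ then uses $\int \Pi'_{\xi_3,n}(x',z')\overline{\Pi'_{\xi_3,k}(x',z')}dz'=\delta_{nk}\Pi'_{\xi_3,n}(x',x')=\delta_{nk}|\xi_3|/(2\pi)$, which follows from the projections being mutually orthogonal. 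The resulting quantity is independent of $x$, so it extends by density to all $g\in L^2$. The remaining technical points are interchanging sums and integrals, which is justified by Tonelli since everything is nonnegative after squaring, and the finiteness of the expression, which holds because $m$ is bounded with compact support.
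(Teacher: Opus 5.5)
Your main argument is the paper's proof. You write $\|M_g m(A)\|_{\mathfrak{G}_2}^2=\int|g(x)|^2\int|K_{m(A)}(x,y)|^2\,dy\,dx$, identify the inner integral with the diagonal kernel of $m(A)m(A)^*=|m|^2(A)$, and evaluate it via $\Pi'_{\xi_3,n}(x',x')=|\xi_3|/(2\pi)$, the substitution $\eta=(2n+1)|\xi_3|$ and $\sum_n(2n+1)^{-2}=\pi^2/8$. Your alternative justification (Plancherel in $x_3$ plus orthogonality of the Landau projections) is the route the paper uses for Lemma \ref{second_term}, and it makes the diagonal evaluation rigorous; since the inner integral is constant in $x$, you do not need the density argument in $g$ at all.
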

\begin{proof}
    The kernel of $m(A)$ is given by
    $$K_m(x,y)=\frac{1}{2\pi}\sum_{n=0}^\infty\int_{\mathbb{R}}m((2n+1)|\xi_3|)\Pi'_{\xi_3,n}(x',y')e^{i\xi_3(x_3-y_3)}d\xi_3.$$
    Therefore,
    \begin{align}\label{eqt_1}
    \|M_gm(A)\|^2_{\mathfrak{G}_2}=\int_{\mathbb{R}^3}|g(x)|^2\left(\int_{\mathbb{R}^3}|K_m(x,y)|^2dy\right)dx.
    \end{align}
    The inner integral is the diagonal kernel of $m(A)m(A)^*=|m|^2(A)$. Hence
    \begin{align*}
        K_{|m|^2(A)}(x,x)&=\frac{1}{2\pi}\sum_{n=0}^\infty \int_{\mathbb{R}}|m((2n+1)|\xi_3|)|^2\Pi'_{\xi_3,n}(x',x')d\xi_3\\
        &=\frac{1}{4\pi^2}\sum_{n=0}^\infty\int_{\mathbb{R}}|m((2n+1)|\xi_3|)|^2|\xi_3|d\xi_3\\
        &=\frac{1}{2\pi^2}\sum_{n=0}^\infty\frac{1}{(2n+1)^2}\int^\infty_0|m(\eta)|^2\eta d\eta,\text{ with }\eta=(2n+1)|\xi_3|,\\
        &=\frac{1}{16}\int^\infty_0|m(\eta)|^2\eta d\eta.
    \end{align*}
    Substituting this into (\ref{eqt_1}) gives us the desired identity.
\end{proof}

\begin{lemma}\label{second_term}
    Let $m$ be defined as in Lemma \ref{first_term}, then for arbitrary $g_1,g_2\in L^2(\mathbb{R}^3)$,
    $$\sum_{j=1}^2\left\|M_{g_j}X_jm(A)\right\|^2_{\mathfrak{G}_2}\leq\frac{1}{16}\left(\int_0^\infty \eta^2|m(\eta)|^2d\eta\right)\sum_{j=1}^2\|g_j\|^2_{L^2(\mathbb{R}^3)}.$$
\end{lemma}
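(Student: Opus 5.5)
Following the proof of Lemma \ref{first_term}, we write each Hilbert--Schmidt norm as a weighted integral of a diagonal kernel:
$$
\|M_{g_j}X_jm(A)\|^2_{\mathfrak{G}_2}=\int_{\mathbb R^3}|g_j(x)|^2\,K_{X_j|m|^2(A)X_j}(x,x)\,dx ,
$$
since $(X_jm(A))(X_jm(A))^*=X_j|m|^2(A)X_j$ and the $X_j$ are symmetric. The task is therefore to compute the diagonal values $d_j(x):=K_{X_j|m|^2(A)X_j}(x,x)$. Because $A$ commutes with the Heisenberg left translations and the $X_j$ are translation invariant, $d_j(x)$ does not depend on $x$. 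Then
$$
\sum_j\|M_{g_j}X_jm(A)\|^2_{\mathfrak{G}_2}\le \max_j d_j\sum_j\|g_j\|^2 .
$$
By the rotational symmetry in $x'$, $d_1=d_2=\tfrac12(d_1+d_2)$. So it suffices to show
$$
d_1+d_2\le \frac18\int_0^\infty\eta^2|m(\eta)|^2\,d\eta .
$$

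To compute $d_1+d_2$, we pass through $\mathcal F_3$. For fixed $\xi_3$ the operators $X_j$ become the magnetic momenta $\Pi_j$ of \eqref{FAF*}, and $\Pi_1^2+\Pi_2^2=H_{\xi_3}=\sum_n\mu_n(\xi_3)\Pi'_{\xi_3,n}$. The diagonal kernel of $\sum_j X_j|m|^2(A)X_j$ is
$$
\frac1{2\pi}\int_{\mathbb R}\Big(\sum_j \Pi_j\,|m|^2(H_{\xi_3})\,\Pi_j\Big)(x',x')\,d\xi_3 .
$$
Let $d(\xi_3)$ denote the $x'$-independent diagonal value of the two-dimensional operator in the parentheses, which is constant by magnetic translation invariance. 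Then $d(\xi_3)$ equals the trace per unit area of that operator, so
$$
d(\xi_3)=\lim\frac{1}{|\Omega|}\Tr\Big(\mathbf 1_\Omega\sum_j\Pi_j|m|^2(H)\Pi_j\mathbf 1_\Omega\Big).
$$
Here we would use the ladder structure. Write $a=\Pi_1+i\Pi_2$ up to normalization, so that $\Pi_1^2+\Pi_2^2=H$ and $a$ maps level $n$ to level $n\pm1$. Then
$$
\sum_j\Pi_j|m|^2(H)\Pi_j=\sum_n\Big(\tfrac{\mu_n+\mu_{n+1}}{2}\cdot\text{weights}\Big)\ldots
$$
More concretely, $\sum_j\Pi_j\Pi'_n\Pi_j$ is supported on levels $n\pm1$. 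Its level-$k$ components have per-area densities $\frac{|\xi_3|}{2\pi}\cdot|\xi_3|(n+1)$ and $\frac{|\xi_3|}{2\pi}\cdot|\xi_3|n$. Summing them gives per-area density $\frac{|\xi_3|}{2\pi}\mu_n(\xi_3)$. This is consistent with the identity $\sum_j\Tr(\Pi'_n\Pi_j\mathbf 1_\Omega\Pi_j\Pi'_n)\approx \mu_n\Tr(\Pi'_n\mathbf 1_\Omega)$ coming from $H\Pi'_n=\mu_n\Pi'_n$. Thus
$$
d(\xi_3)=\frac{|\xi_3|}{2\pi}\sum_n|m(\mu_n(\xi_3))|^2\mu_n(\xi_3).
$$
A cleaner way to get this without ladder bookkeeping is to apply Proposition \ref{prop0} to the identity $\sum_j\|\mathbf 1_\Omega\Pi_j\Pi'_n\|_{\mathfrak{S}_2}^2=\sum_j\|\Pi'_n\Pi_j\mathbf 1_\Omega\|^2$ together with the diagonal of $\Pi'_n H\Pi'_n$.

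Given this formula, the same change of variables $\eta=(2n+1)|\xi_3|$ as in Lemma \ref{first_term} yields
$$
d_1+d_2=\frac1{4\pi^2}\sum_n\int|m((2n+1)|\xi_3|)|^2(2n+1)|\xi_3|^2\,d\xi_3
=\frac{1}{2\pi^2}\sum_n\frac{1}{(2n+1)^2}\int_0^\infty\eta^2|m(\eta)|^2\,d\eta
=\frac18\int_0^\infty\eta^2|m(\eta)|^2\,d\eta .
$$
Halving this gives $d_j=\frac1{16}\int\eta^2|m|^2\,d\eta$, and hence the lemma, in fact with equality when $g_1=g_2$ in norm. The main obstacle is justifying the per-area diagonal computation for $\Pi_j|m|^2(H)\Pi_j$. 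Here I would avoid the ladder operators and argue directly: the diagonal of $\sum_j\Pi_j F(H)\Pi_j$ equals the diagonal of $F(H)^{1/2}HF(H)^{1/2}$. This holds because both operators are magnetic-translation invariant with equal traces per unit area, by cyclicity of the trace per area, which is legitimate for invariant operators. That diagonal is $\frac{|\xi_3|}{2\pi}\sum_n F(\mu_n)\mu_n$ by \eqref{projker1}.
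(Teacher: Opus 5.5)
Your argument is essentially the paper's. You write each Hilbert--Schmidt norm through the diagonal kernel of $X_j|m|^2(A)X_j$, pass to $\mathcal F_3$, and use that $\sum_j \widehat X_j\Pi'_{\xi_3,n}\widehat X_j=|\xi_3|\bigl(n\Pi'_{\xi_3,n-1}+(n+1)\Pi'_{\xi_3,n+1}\bigr)$ has diagonal $\frac{|\xi_3|}{2\pi}\mu_n(\xi_3)$. The paper derives this from the ladder relations. Your trace-per-area cyclicity argument is a valid substitute, since the commutant of the magnetic translations is $\mathcal B(\ell^2(\mathbb N_0))\otimes I$ and carries the trace $\frac{|\xi_3|}{2\pi}\Tr$.

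Your last display, however, contains an arithmetic slip: $\frac{1}{2\pi^2}\sum_n(2n+1)^{-2}=\frac{1}{2\pi^2}\cdot\frac{\pi^2}{8}=\frac1{16}$, not $\frac18$ (the same factor appears in Lemma~\ref{first_term}). Hence $d_1+d_2=\frac1{16}\int_0^\infty\eta^2|m(\eta)|^2\,d\eta$, exactly the paper's value. The paper then simply concludes with $d_j\le d_1+d_2$, so your rotational-symmetry step is not needed. If you keep it, you get $d_1=d_2=\frac1{32}\int_0^\infty\eta^2|m|^2\,d\eta$. That is the lemma with constant $\frac1{32}$, with equality for all $g_1,g_2$. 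Your claim of equality with constant $\frac1{16}$ is therefore false, although the stated inequality is still proved.

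Separately, the identity in your ``cleaner way'' remark, $\sum_j\|\mathbf 1_\Omega\Pi_j\Pi'_n\|^2_{\mathfrak S_2}=\sum_j\|\Pi'_n\Pi_j\mathbf 1_\Omega\|^2_{\mathfrak S_2}$, is just passage to the adjoint and carries no information. What that route actually needs is $\|\mathbf 1_\Omega T\|_{\mathfrak S_2}=\|T\mathbf 1_\Omega\|_{\mathfrak S_2}$ for $T=\Pi_j\Pi'_n$. This holds because $|T(x',y')|$ depends only on $x'-y'$ for magnetically translation-invariant $T$. Combine it with $\sum_j\|\Pi_j\Pi'_n\mathbf 1_\Omega\|^2_{\mathfrak S_2}=\mu_n\Tr(\mathbf 1_\Omega\Pi'_n\mathbf 1_\Omega)$.
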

\begin{proof}
    Note first
    $$\left\|M_{g_j}X_jm(A)\right\|^2_{\mathfrak{G}_2}=\int_{\mathbb{R}^3}|g_j(x)|^2\left(\int_{\mathbb{R}^3}\left|X_jK_m(x,y)\right|^2dy\right)dx.$$

 By Plancherel's theorem in $y_3$ and denoting by $\widehat{X}_j(\xi_3)$ the Fourier transform of $X_j$ in $y_3$, we have
    \begin{align}
        \int_{\mathbb{R}^3}&\left|X_jK_m(x,y)\right|^2dy \notag\\
        &=\frac{1}{2\pi}\int_\mathbb{R}\int_{\mathbb{R}^2}\left|\sum_{n=0}^\infty m(\mu_n(\xi_3))(\widehat{X}_j(\xi_3)\Pi'_{\xi_3,n})(x',y')\right|^2dy'd\xi_3 \notag\\
        &=\frac{1}{2\pi}\sum_{n=0}^\infty \int_{\mathbb{R}}|m(\mu_n(\xi_3))|^2K_{\widehat{X}_j\Pi'_{\xi_3,n}\widehat{X}_j}(x',x')d\xi_3, \label{eqt_4}
    \end{align}
    where (\ref{eqt_4}) is obtained by the orthogonality of Landau projections. For $\xi_3\neq 0$, set $s=\sgn \xi_3$ and
$$
    a_{\xi_3}=\frac{\widehat{X}_1(\xi_3)+is\widehat{X}_2(\xi_3)}{\sqrt{2|\xi_3|}},\quad 
    a_{\xi_3}^*=\frac{\widehat{X}_1(\xi_3)-is\widehat{X}_2(\xi_3)}{\sqrt{2|\xi_3|}}.
$$
Since $[\widehat{X}_1(\xi_3),\widehat{X}_2(\xi_3)]=i\xi_3,$
$$
[a_{\xi_3},a_{\xi_3}^*]=I,\quad \widehat{X}_1(\xi_3)^2+\widehat{X}_2(\xi_3)^2=|\xi_3|(2a_{\xi_3}^*a_{\xi_3}+1).
$$
Set $N_{\xi_3}=a_{\xi_3}^*a_{\xi_3}$, then $\Pi'_{\xi_3,n}$ is the spectral projection of $N_{\xi_3}$ with eigenvalue $n$. Moreover, since
$$
[N_{\xi_3},a_{\xi_3}]=-a_{\xi_3}\quad [N_{\xi_3},a_{\xi_3}^*]=a_{\xi_3}^*,
$$
it follows that
\begin{align}\label{eqt_2}
a_{\xi_3}\Pi'_{\xi_3,n}a_{\xi_3}^*=n\Pi'_{\xi_3,n-1},\quad a_{\xi_3}^*\Pi'_{\xi_3,n}a_{\xi_3}=(n+1)\Pi'_{\xi_3,n+1},
\end{align}
with $\Pi'_{\xi_3,-1}=0$. Solving for $\widehat{X}_1,\widehat{X}_2$ in terms of $a_{\xi_3},a_{\xi_3}^*$ gives
\begin{align}\label{eqt_3}
\widehat{X}_1=\sqrt{\frac{|\xi_3|}{2}}(a_{\xi_3}+a^*_{\xi_3}),\quad \widehat{X}_2=is\sqrt{\frac{|\xi_3|}{2}}(a_{\xi_3}^*-a_{\xi_3}).
 \end{align}
Combining (\ref{eqt_2}) and (\ref{eqt_3}) gives
 \begin{align*}
\widehat{X}_1\Pi'_{\xi_3,n}\widehat{X}_1+\widehat{X}_2\Pi'_{\xi_3,n}\widehat{X}_2=|\xi_3|\left(n\Pi'_{\xi_3,n-1}+(n+1)\Pi'_{\xi_3,n+1}\right).
\end{align*}
Taking diagonal kernels and using $\Pi'_{\xi_3,n}(x',x')=|\xi_3|/2\pi$,
\begin{align}\label{eqt_5}
 K_{\widehat{X}_1\Pi'_{\xi_3,n}\widehat{X}_1+\widehat{X}_2\Pi'_{\xi_3,n}\widehat{X}_2}(x',x')=(2n+1)\frac{|\xi_3|^2}{2\pi}.
 \end{align}
Using (\ref{eqt_4}) and (\ref{eqt_5}) we have
\begin{align*}
        \sum_{j=1}^2\int_{\mathbb{R}^3}\left|X_jK_m(x,y)\right|^2dy&=\frac{1}{4\pi^2}\sum_{n=0}^\infty\int_{\mathbb{R}}(2n+1)|\xi_3|^2|m((2n+1)|\xi_3|)|^2d\xi_3\\
        &=\frac{1}{16}\int_0^\infty \eta^2|m(\eta)|^2d\eta.
\end{align*}
Consequently,
\begin{align*}
\sum_{j=1}^2\|M_{g_j}X_jm(A)\|^2_{\mathfrak{G}_2}&\leq \int_{\mathbb{R}^3}\left(\sum_{j=1}^2|g_j(x)|^2\right)\left(\sum_{j=1}^2\int_{\mathbb{R}^3}\left|X_jK_m(x,y)\right|^2dy\right)dx\\
&\leq \frac{1}{16}\left(\int_0^\infty \eta^2|m(\eta)|^2d\eta\right)\sum_{j=1}^2\|g_j\|^2_{L^2(\mathbb{R}^3)}.
\end{align*}
This proves the lemma.
\end{proof}

\begin{proposition}\label{PBA_HS}
Let $b\in C^2_0(\mathbb{R}^3)$. Then $P_\lambda BA\in\mathfrak{G}_2$.
\end{proposition}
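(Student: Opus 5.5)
The goal is to show that $P_\lambda BA$ is Hilbert--Schmidt, using the same commutator idea as in Theorem \ref{1.5}. Since $\mathfrak{G}_2$ is closed under adjoints, it suffices to show $(P_\lambda BA)^*=AB P_\lambda$ (on its natural domain) is Hilbert--Schmidt. We write
$$
ABP_\lambda = BAP_\lambda + [A,B]P_\lambda ,
$$
and treat the two terms separately. For the first, $AP_\lambda = m(A)$ with $m(\eta)=\eta\,\mathbf 1_{[0,\lambda)}(\eta)$, which is bounded and compactly supported. Lemma \ref{first_term} with $g=b\in L^2(\mathbb R^3)$ then gives
$$
\|M_b\, m(A)\|_{\mathfrak{G}_2}^2=\frac1{16}\int_0^\lambda \eta^3\,d\eta\,\|b\|_{L^2}^2=\frac{\lambda^4}{64}\|b\|_{L^2}^2<\infty.
$$

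For the commutator term, we compute $[A,B]$ with $A=X_1^2+X_2^2$. Since $X_j=i(\partial_{x_j}\pm\frac12 x_{3-j}\partial_{x_3})$ is a first-order operator, $[X_j,M_b]=M_{X_jb}$, where $X_jb$ means $X_j$ applied to $b$ as a function. Note that $X_jb$ is $C^1$, compactly supported, and grows at most linearly in the coefficients, but stays compactly supported. Hence
$$
[A,B]=\sum_{j=1}^2\bigl(X_jM_{X_jb}+M_{X_jb}X_j\bigr)=\sum_{j=1}^2\bigl(2M_{X_jb}X_j+M_{X_j^2b}\bigr).
$$
Here $X_j^2 b\in C_0(\mathbb R^3)\subset L^2$ because $b\in C_0^2$. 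Applied to $P_\lambda$, the term $M_{X_j^2b}P_\lambda=M_{X_j^2 b}\,\mathbf 1_{[0,\lambda)}(A)$ is Hilbert--Schmidt by Lemma \ref{first_term}, with norm squared $\frac{\lambda^2}{32}\|X_j^2b\|^2_{L^2}$. The terms $M_{X_jb}X_jP_\lambda$ are controlled by Lemma \ref{second_term} with $g_j=X_jb\in L^2$ and $m=\mathbf 1_{[0,\lambda)}$:
$$
\sum_j\|M_{X_jb}X_jP_\lambda\|_{\mathfrak{G}_2}^2\le \frac{\lambda^3}{48}\sum_j\|X_jb\|_{L^2}^2 .
$$
Summing the pieces by the triangle inequality shows $ABP_\lambda\in\mathfrak{G}_2$, and therefore $P_\lambda BA\in\mathfrak{G}_2$.

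The step needing most care is justifying the operator identities on domains. $P_\lambda BA$ is a priori only densely defined (on $\mathcal D(A)$), so the claim should be read as: it extends to a Hilbert--Schmidt operator equal to $(ABP_\lambda)^*$. To make this rigorous, we will check that $BP_\lambda$ maps $L^2$ into $\mathcal D(A)$. The range of $P_\lambda$ consists of functions $u$ with $X_ju, X_jX_ku\in L^2$ (this follows from the spectral representation and the ladder relations \eqref{eqt_2}), and multiplication by $b\in C_0^2$ preserves this (the Leibniz rule uses the bounded functions $X_jb$, $X_j^2b$). Then for $u\in\mathcal D(A)$ and $v\in L^2$ we have $(P_\lambda BAu,v)=(u,ABP_\lambda v)$, which identifies the closure. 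The commutator expansion is first verified on $C_0^\infty$-dense subspaces of the range and extended by the Hilbert--Schmidt bounds just obtained.
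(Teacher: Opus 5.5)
Your proof is correct and follows the same route as the paper. You peel off the commutator $[A,B]=2M_{X_1b}X_1+2M_{X_2b}X_2+M_{Ab}$ and bound every piece through the diagonal of the spectral kernel, getting the same $\lambda^4/64$ constant for the main term. Passing to the adjoint $ABP_\lambda$ first is a small streamlining: each term is then exactly of the form $M_g\,m(A)$ or $M_{g_j}X_j\,m(A)$ covered by Lemmas \ref{first_term} and \ref{second_term}, which spares you the paper's second commutation $P_\lambda M_{X_jb}X_j=P_\lambda X_jM_{X_jb}-P_\lambda M_{X_j^2b}$ and its appeal to ``similar arguments''. Your domain remark also makes explicit what the paper leaves implicit.
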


\begin{proof}
Note that $P_\lambda BA = P_\lambda A B -  P_\lambda [A,B]$ where 
\begin{equation}\label{[AB]}
[A,B] = 2M_{X_1 b}X_1 + 2M_{X_2 b} X_2+ M_{Ab}.
\end{equation}
Clearly we have $P_\lambda M_{Ab}\in\mathfrak{G}_2$. 
Let us first consider the operator $P_\lambda A B \in\mathfrak{G}_2$. Indeed
\begin{multline}\label{PAB}
\|P_\lambda A B\|^2_{\mathfrak{G}_2} = 
\frac{1}{2\pi}\sum_{n}\int_{\Bbb R^3}\int_{ (2n+1)|\xi| < \lambda } |(2n+1)\xi_3|^2 \Pi'_{\xi_3,n}(x',x')  b^2(x) d\xi_3 dx\\
\le 
\frac{\lambda^2}{4\pi^2} \sum_{n} \int_{ (2n+1)|\xi| < \lambda } |\xi| \, d\xi \int_{\Bbb R^3} b^2(x) dx
\le \frac{\lambda^4}{64} \int_{\Bbb R^3} b^2(x) dx.
\end{multline}
Note that in the expansion  
$$ 
P_\lambda M_{X_j b} X_j = P_\lambda X_j M_{X_j b} - P_\lambda M_{X^2_jb}.
$$
the operators $M_{X^2_jb}$, $j=1,2$,  are multiplication operators by a function from $C_0(\Bbb R^3)$ and thus   $ P_\lambda M_{X^2_jb} \in\mathfrak{G}_2 $.
For the proof $P_\lambda X_j M_{X_j b}\in\mathfrak{G}_2$ we use similar arguments as in \eqref{PAB} and the equations \eqref{eqt_3}.
\end{proof}

\begin{proposition}\label{off_diag}
    For $b\in C^2_0(\mathbb{R}^3)$, we have
    $$\|P_{\lambda}B(I-P_\lambda)\|_{\mathfrak{G}_2}^2=O(\lambda^{3/2}),\quad\lambda\to\infty.$$
\end{proposition}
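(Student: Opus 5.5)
We will use the splitting from the proof of Theorem \ref{1.5}, with a free parameter $\mu\in(0,\lambda)$ to be optimised at the end. By orthogonality of spectral projections,
\[
\|P_\lambda B(I-P_\lambda)\|_{\mathfrak{G}_2}^2\le \|P_{\lambda-\mu,\lambda}B\|_{\mathfrak{G}_2}^2+\|(A-\lambda)^{-1}P_{\lambda-\mu}[A,B]\|_{\mathfrak{G}_2}^2 ,
\]
where the second bound is Theorem \ref{1.5}. Its hypothesis $P_\lambda BA\in\mathfrak{G}_2$ is supplied by Proposition \ref{PBA_HS}. We then estimate the two terms separately with Lemmas \ref{first_term} and \ref{second_term}.

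For the first term, take adjoints and apply Lemma \ref{first_term} with $m=\mathbf 1_{[\lambda-\mu,\lambda)}$ and $g=b$:
\[
\|P_{\lambda-\mu,\lambda}B\|_{\mathfrak{G}_2}^2=\frac1{16}\|b\|_{L^2}^2\int_{\lambda-\mu}^{\lambda}\eta\,d\eta\le \frac{\lambda\mu}{16}\|b\|_{L^2}^2 .
\]

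For the second term, write
\[
[A,B]=2M_{X_1b}X_1+2M_{X_2b}X_2+M_{Ab}
\]
as in \eqref{[AB]}. Taking adjoints and using that $(A-\lambda)^{-1}P_{\lambda-\mu}=m(A)$ with $m(\eta)=(\eta-\lambda)^{-1}\mathbf 1_{[0,\lambda-\mu)}(\eta)$, the Hilbert--Schmidt norm of the operator splits, up to a constant factor, into the quantities $\|M_{X_jb}X_j m(A)\|_{\mathfrak G_2}$ and $\|M_{Ab}m(A)\|_{\mathfrak G_2}$. Strictly, the adjoint involves $m(A)X_jM_{\overline{X_jb}}$, and $X_j$ is symmetric, so the norms agree. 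Lemma \ref{second_term} then gives
\[
\sum_j\|M_{X_jb}X_jm(A)\|^2_{\mathfrak G_2}\lesssim \|\nabla_X b\|_{L^2}^2\int_0^{\lambda-\mu}\frac{\eta^2}{(\lambda-\eta)^2}\,d\eta\lesssim \lambda^2\int_\mu^\lambda \frac{ds}{s^2}\lesssim \frac{\lambda^2}{\mu},
\]
and Lemma \ref{first_term} gives
\[
\|M_{Ab}m(A)\|^2_{\mathfrak G_2}\lesssim \|Ab\|_{L^2}^2\int_0^{\lambda-\mu}\frac{\eta\,d\eta}{(\lambda-\eta)^2}\lesssim \frac{\lambda}{\mu}.
\]
All these norms of $b$ are finite because $b\in C^2_0$. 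One caveat is that Lemmas \ref{first_term}–\ref{second_term} are stated for compactly supported $m$, and ours is, since its support is $[0,\lambda-\mu)$.

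Combining the two estimates,
\[
\|P_\lambda B(I-P_\lambda)\|_{\mathfrak{G}_2}^2\lesssim \lambda\mu+\frac{\lambda^2}{\mu}+\frac{\lambda}{\mu}.
\]
Choosing $\mu=\lambda^{1/2}$ balances $\lambda\mu$ against $\lambda^2/\mu$, and both become $\lambda^{3/2}$, which is the claim.

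The main obstacle is making the commutator step rigorous: the factor $X_j$ must be placed next to $m(A)$ rather than next to the multiplication operator. This is handled by taking adjoints as described, or alternatively by commuting $X_j$ through $M_{X_jb}$, which produces an extra $M_{X_j^2b}m(A)$ term of order $\lambda/\mu$. The domain issues are covered by Proposition \ref{PBA_HS}.
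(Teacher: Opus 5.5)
Your proposal is correct and follows the paper's proof step for step. The shared steps are:
\begin{itemize}
\item the same $\mu$-splitting;
\item Lemma \ref{first_term} for the band term $\|P_{\lambda-\mu,\lambda}B\|_{\mathfrak{G}_2}^2$;
\item Theorem \ref{1.5} (with Proposition \ref{PBA_HS}), the commutator identity \eqref{[AB]}, and the adjoint identity $\|m_\mu(A)[A,B]\|_{\mathfrak{G}_2}=\|[A,B]m_\mu(A)\|_{\mathfrak{G}_2}$;
\item Lemmas \ref{first_term}--\ref{second_term} for the pieces, giving $O(\lambda/\mu)$ and $O(\lambda^2/\mu)$;
\item the choice $\mu=\sqrt{\lambda}$.
\end{itemize}
Your aside about $m(A)X_jM_{\overline{X_jb}}$ is unnecessary. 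Taking the adjoint of the whole operator, using $m_\mu(A)^*=m_\mu(A)$ and $[A,B]^*=-[A,B]$, already gives $-[A,B]m_\mu(A)=-\bigl(2\sum_j M_{X_jb}X_j+M_{Ab}\bigr)m_\mu(A)$. That puts each $X_j$ next to $m_\mu(A)$, exactly as the lemmas require.
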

\begin{proof}
Let $0<\mu<\lambda$. Orthogonality of spectral projections gives
\begin{align}
        \|P_{\lambda}B(I-P_\lambda)\|_{\mathfrak{G}_2}^2&=\|P_{\lambda-\mu,\lambda}B(I-P_\lambda)\|_{\mathfrak{G}_2}^2+\|P_{\lambda-\mu}B(I-P_\lambda)\|_{\mathfrak{G}_2}^2\notag\\
    &\leq \|P_{\lambda-\mu,\lambda}B\|^2_{\mathfrak{G}_2}+\|P_{\lambda-\mu}B(I-P_{\lambda})\|^2_{\mathfrak{G}_2}. \label{estimate}
\end{align}
For the first term of (\ref{estimate}), since $B=B^*$,
$$
\|P_{\lambda-\mu,\lambda}B\|^2_{\mathfrak{G}_2}=\|BP_{\lambda-\mu,\lambda}\|^2_{\mathfrak{G}_2}.
$$
    Applying Lemma \ref{first_term} with $g=b$ and $m=\mathbf{1}_{[\lambda-\mu,\lambda)}$, we have
\begin{align}\label{first_estimate}
        \|P_{\lambda-\mu,\lambda}B\|^2_{\mathfrak{G}_2}=\frac{1}{16}\int_{\lambda-\mu}^\lambda\eta d\eta\|b\|^2_{L^2}=\frac{2\lambda\mu-\mu^2}{32}\|b\|_{L^2}^2=O(\lambda\mu).
 \end{align}
 By Proposition \ref{PBA_HS},  $P_\lambda BA\in\mathfrak{G}_2$ and thus we can 
 apply Theorem \ref{1.5}  for the second term of (\ref{estimate}) and obtain 
\begin{align}\label{sec_main}
        \|P_{\lambda-\mu}B(I-P_\lambda)\|_{\mathfrak{G}_2}\leq \|(A-\lambda)^{-1}P_{\lambda-\mu}[A,B]\|_{\mathfrak{G}_2}.
\end{align}
Set
$$
m_\mu(\eta)=\frac{\mathbf{1}_{[0,\lambda-\mu)}(\eta)}{\eta-\lambda}.
$$
 Then $m_\mu(A)=(A-\lambda)^{-1}P_{\lambda-\mu}$. Since $m_\mu(A)=m_\mu(A)^*$ and $[A,B]^*=-[A,B]$,
$$
\|m_\mu(A)[A,B]\|_{\mathfrak{G}_2}
    =\|[A,B]m_\mu(A)\|_{\mathfrak{G}_2}.
 $$
 In \eqref{[AB]} we have noticed the commutator identity on \(C_0^2(\mathbb{R}^3)\) 
$$
[A,B]=2M_{X_1b}X_1+2M_{X_2b}X_2+M_{Ab}.
$$
By the triangle inequality and Cauchy-Schwarz,
\begin{align}\label{sec_sub}
        \|m_\mu(A)[A,B]\|^2_{\mathfrak{G}_2}\leq C \left(\sum_{j=1}^2\left\|M_{X_jb}X_jm_\mu(A)\right\|^2_{\mathfrak{G}_2}+\left\|M_{Ab}m_\mu(A)\right\|^2_{\mathfrak{G}_2} \right),
    \end{align}
    where $C$ is a constant. Now Lemma \ref{first_term} gives
    \begin{align}
        \left\|M_{Ab}m_\mu(A)\right\|^2_{\mathfrak{G}_2}&=\frac{1}{16}\int_0^{\lambda-\mu}\frac{\eta}{(\lambda-\eta)^2}d\eta\|Ab\|^2_{L^2}\notag\\
        &=\frac{1}{16}\left(\frac{\lambda}{\mu}-1-\log\frac{\lambda}{\mu}\right)\|Ab\|^2_{L^2}=O(\lambda/\mu). \label{first_es_term}
\end{align}
Lemma \ref{second_term} gives
\begin{align}
        \sum_{j=1}^2\left\|M_{X_jb}X_jm_\mu(A)\right\|^2_{\mathfrak{G}_2}&\leq \frac{1}{16}\int_0^{\lambda-\mu}\frac{\eta^2}{(\lambda-\eta)^2}d\eta\sum_{j=1}^2\|X_jb\|^2_{L^2}\notag\\
        &=\frac{1}{16}\left(\frac{\lambda^2}{\mu}-2\lambda\log\frac
        {\lambda}{\mu}-\mu\right)\sum_{j=1}^2\|X_jb\|^2_{L^2}=O(\lambda^2/\mu). \label{second_est_term}
    \end{align}
Here $b\in C^2_0(\mathbb{R}^3)$ ensures $X_jb,Ab\in L^2(\mathbb{R}^3)$. Since $\lambda/\mu\leq\lambda^2/\mu$ for $\lambda\geq 1$, plugging (\ref{first_es_term}) and (\ref{second_est_term}) back into (\ref{sec_sub}), together with (\ref{sec_main}) we obtain
\begin{align}\label{second_estimate}
        \|P_{\lambda-\mu}B(I-P_\lambda)\|_{\mathfrak{G}_2}^2\leq C_b\frac{\lambda^2}{\mu}.
\end{align}
Combining (\ref{first_estimate}) and (\ref{second_estimate}),
$$
\|P_\lambda B(I-P_\lambda)\|_{\mathfrak{G}_2}^2\leq C_b\left(\lambda\mu+\frac{\lambda^2}{\mu}\right),\quad 0<\mu<\lambda.
$$
For $\lambda> 1$, choosing $\mu=\sqrt{\lambda}$ gives the desired result.
\end{proof}

\smallskip
\noindent
We now complete the proof of Theorem \ref{Szego} by applying Propositions \ref{off_diag}, \ref{prop1}  and 
Theorem \ref{DSide}.

\bigskip

\bibliographystyle{amsplain}

\end{document}